\date{}
\numberwithin{equation}{section}
\newtheorem{theo}{Theorem}[section]
\newtheorem{cor}{Corollary}[section]
\newtheorem{prop}{Proposition}[section]
\newtheorem{lem}{Lemma}[section]
\numberwithin{figure}{section}
\newcommand{\dN}{{\partial_\nu}}
\newcommand{\pR}{{\Phi_\rho}}
\newcommand{\bn}{{\mathbb{N}}}
\newcommand{\br}{{\mathbb{R}}}
\newcommand{\bc}{{\mathbb{C}}}
\newenvironment{proof}
	{\textbf{Proof.}}
	{\hfill $\square$\vskip 8pt}
\title{Logarithmic stability inequality in an inverse source problem for the heat equation on a waveguide}
\author{
Yavar {\sc Kian}$\footnote{Aix Marseille Universit\'e, Univ. Toulon, CNRS, CPT, Marseille, France, 
yavar.kian@univ-amu.fr}$, 
Diomba {\sc Sambou}$\footnote{Departamento de Matem\'aticas, Facultad de 
Matem\'aticas, Pontificia Universidad Cat\'olica de Chile, Vicu\~na Mackenna 
4860, Santiago de Chile, disambou@mat.uc.cl}$,
and Eric {\sc Soccorsi}$\footnote{Aix Marseille Universit\'e, Univ. Toulon, CNRS, CPT, Marseille, France, eric.soccorsi@univ-amu.fr}$
}
\begin{document}

\maketitle

\begin{abstract} We prove logarithmic stability in the parabolic inverse problem of determining the space-varying factor in the source, by a single partial boundary measurement of the solution to the heat equation in an infinite closed waveguide, with homogeneous initial and Dirichlet data.
\end{abstract}

\bigskip

\noindent
\textbf{Mathematics subject classification 2010:} 35R30, 35K05.

\bigskip

\noindent
\textbf{Keywords:} Inverse problem, heat equation, time-dependent source term, stability inequality, 
Carleman estimate, partial boundary data.


\section{Introduction}


\subsection{Statement and origin of the problem}

Let $\omega \subset \mathbb{R}^{n - 1}$, $n \geq 2$, be open and connected, with $\mathcal{C}^4$ boundary $\partial \omega$. Set $\Omega := \omega \times \br$ and $\Gamma := \partial \omega \times \br$. For $T \in (0,+\infty)$ fixed, we consider the parabolic initial boundary value problem (IBVP)
\begin{equation}
\label{eq:bvp}
\begin{cases} 
\partial_t u - \Delta u = F(t,x) & \text{in}\ Q := (0,T) \times \Omega, \\ 
u(0,\cdot) = 0 & \text{in}\ \Omega, \\
u = 0 & \text{on}\ \Sigma := (0,T) \times \Gamma,
\end{cases}
\end{equation}
with source term $F \in L^2(Q)$. 
In this paper, we examine the inverse problem of determining $F$ from a single Neumann boundary measurement of the solution $u$ to \eqref{eq:bvp}. 

Let us first notice that there is a natural obstruction to uniqueness in this problem. This can be easily understood from the identity
$\partial_\nu u=0$ on $\Sigma$, verified by any $u \in \mathcal C^\infty_0(Q)$, despite of the fact that the function $F:=(\partial_t-\Delta)u$ may well be non uniformly zero in $Q$. Otherwise stated, the observation of $\partial_\nu u$ on $\Sigma$ may be unchanged, whereas $F$ is modified. To overcome this problem, different lines of research can be pursued. One of them is to extend the set of data available in such a way that $F$ is uniquely determined by these observations. Another direction is the one of assuming that the source term $F$ is {\it a priori} known to have the structure
\begin{equation}
\label{eq-s}
F(t,x)=\sigma(t)\beta(x), \quad (t,x) \in Q, 
\end{equation}
where $t \mapsto \sigma(t)$ is a known function, and then proving that $\dN u$ uniquely determines $\beta$.
In this paper we investigate the second direction. Namely,  we examine the stability issue in the identification of the
time-independent part  $\beta$ of the source, from partial observation on $\Sigma$ of the flux $\dN u$ induced by the solution $u$ to \eqref{eq:bvp}. 

Source terms of the form \eqref{eq-s} are commonly associated with the reaction term in linear reaction diffusion equations. These equations arise naturally in various fields
of application, investigating systems made of several interacting components, such as population dynamics \cite{M}, 
fluid dynamics \cite{B}, or heat conduction \cite{BBC}. More precisely, when $\sigma(t):=e^{-\mu t}$, where $\mu$ a positive constant, the system \eqref{eq:bvp}-\eqref{eq-s} describes the diffusion in transmission lines or cooling pipes with significantly large length-to-diameter ratio, of decay heat, that is the heat released as a result of radioactive decay.
In this particular case, \eqref{eq-s} models a heat source produced by the decay of a radioactive isotope, and $\beta$ is the spatial density of the isotope. From a practical viewpoint, the rate of decay $\mu$ of the isotope inducing the decay heat diffusion process, is known, and therefore the same is true for the function $\sigma$, while the density function $\beta$ is generally unknown.
This motivates for a closer look into the inverse problem under investigation in this article.



\subsection{Existing papers: a short review}

Inverse source problems have been extensively studied over the last decades. We refer to \cite{I} for a more general overview of this topic than the one presented in this section, where we solely focus on parabolic inverse source problems consisting in determining a source term by boundary measurements of the solution to a parabolic equation. The uniqueness issue for this problem was investigated in \cite{C}, and conditional stability was derived in
\cite{choY,yam,yam1}. In \cite{IY}, inspired by the Bukhgeim-Klibanov approach introduced in \cite{BK}, Imanuvilov and Yamamoto proved Lipschitz stability of the source with respect to one Neumann boundary measurement of the solution to a parabolic equation with non-degenerate initial data, and partial Dirichlet data supported on arbitrary subregions of the boundary. In \cite{cho}, Choulli and Yamamoto established a log-type stability estimate for the time-independent source term $\beta$, appearing in \eqref{eq-s}, by a single Neumann observation of the solution on an arbitrary sub-boundary. 

All the above mentioned results are stated in a bounded spatial domain. But, to the best of our knowledge, there is no result available in the mathematical literature, dealing with the recovery of a non-compactly supported unknown source function, appearing in a parabolic equation, by boundary measurements of the solution. 
This is the starting point of this paper, in the sense that we aim for extending the stability result of \cite{cho}, which is valid in bounded spatial domains only, to the framework of infinite cylindrical domains. 

\subsection{The forward problem}

Prior to describing the main achievement of this paper in section \ref{sec-mr}, we briefly investigate the well-posedness of the IBVP\eqref{eq:bvp}. Actually, we start by examining the forward problem associated with the IBVP
\begin{equation}
\label{eq:bvp1}
\begin{cases} 
\partial_t v - \Delta v = f & \text{in}\ Q, \\ 
v(0,\cdot) = v_0 & \text{in}\ \Omega,\\
v = 0 & \text{on}\ \Sigma,
\end{cases}
\end{equation}
for suitable source term $f$ and initial data $v_0$. More precisely, we seek an existence, uniqueness and (improved) regularity result for the solution to the above system, as well as a suitable energy estimate. Such results are rather classical in the case of bounded spatial domains, but it turns out that they are not so well-documented for unbounded domains such as $\Omega$. Therefore, for the sake of completeness, we shall establish Theorem \ref{t1}, presented below. 

As a preliminary,  we recall for all $s\in(0,+\infty)$, that the Sobolev space
$$ H^s(\Omega):=L^2(\br;H^s(\omega))\cap H^s(\br;L^2(\omega))$$
is endowed with the norm
$$ \|u\|_{H^s(\Omega)}^2:=\|u\|_{L^2(\br;H^s(\omega))}^2+\|u\|_{H^s(\br;L^2(\omega))}^2, $$
and then remind from \cite[Section 4.2.1]{LM2} that
$$
H^{r,s}(Q) := L^2 \big(0,T;H^r(\Omega)) \cap 
H^s(0,T;L^2(\Omega)),\ r \in ( 0, +\infty).
$$
Having said that, we may now state the following forward result associated with \eqref{eq:bvp1}.

\begin{theo}
\label{t1}
Let $v_0 \in H_0^1(\Omega)$ and $f \in L^2(0,T;H_0^1(\Omega))$. Then, there exists a unique solution $v \in H^{2,1}(Q) \cap \mathcal C([0,T];H^1_0(\Omega))$ to the IBVP \eqref{eq:bvp1}, such that
\begin{equation}
\label{es1}
\Vert v(t) \Vert_{H^1(\Omega)} \leq 
\Vert v_0 \Vert_{H^1(\Omega)} + \Vert f \Vert_{L^2(0,T;H^1(\Omega))},\ t\in[0,T].
\end{equation}
\end{theo}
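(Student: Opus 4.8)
The plan is to proceed by the standard Galerkin / spectral approach, but carried out carefully in the unbounded cylinder $\Omega = \omega \times \br$, where the transverse Dirichlet Laplacian on $\omega$ has compact resolvent while the longitudinal variable contributes a continuous spectrum. First I would introduce the (unbounded, self-adjoint, positive) operator $A := -\Delta$ in $L^2(\Omega)$ with domain $D(A) = H^2(\Omega) \cap H_0^1(\Omega)$; one checks this is the generator of an analytic contraction semigroup $e^{-tA}$, and that $D(A^{1/2}) = H_0^1(\Omega)$ with $\|A^{1/2} u\|_{L^2(\Omega)}$ equivalent to $\|u\|_{H^1(\Omega)}$ on that space (using the Poincaré inequality on the bounded cross-section $\omega$). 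Existence and uniqueness of a mild solution $v(t) = e^{-tA} v_0 + \int_0^t e^{-(t-s)A} f(s)\, ds$ is then immediate from semigroup theory; the content is the regularity $v \in H^{2,1}(Q) \cap \mathcal{C}([0,T];H_0^1(\Omega))$ and the energy estimate \eqref{es1}.

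For the regularity, the key observation is the hypothesis $f \in L^2(0,T;H_0^1(\Omega)) = L^2(0,T;D(A^{1/2}))$, which is exactly the smoothness needed for maximal $L^2$ regularity to upgrade the mild solution. Concretely I would apply the standard analytic-semigroup maximal regularity estimate to $w := A^{1/2} v$, which solves $\partial_t w - \Delta w = A^{1/2} f$ with $A^{1/2} f \in L^2(Q)$ and $w(0) = A^{1/2} v_0 \in L^2(\Omega)$; maximal parabolic regularity then gives $\partial_t w, A w \in L^2(Q)$, i.e. $\partial_t v \in L^2(0,T;H_0^1(\Omega))$ and $A v \in L^2(0,T;H_0^1(\Omega))$, hence in particular $v \in L^2(0,T;H^2(\Omega))$ and $\partial_t v \in L^2(Q)$, which is $v \in H^{2,1}(Q)$. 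The continuity $v \in \mathcal{C}([0,T];H_0^1(\Omega))$ follows from the interpolation inclusion $H^1(0,T;L^2(\Omega)) \cap L^2(0,T;H^2(\Omega)) \hookrightarrow \mathcal{C}([0,T];H^1(\Omega))$ (Lions–Magenes, \cite{LM2}), together with the density argument making the trace at $t=0$ equal to $v_0$.

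For the energy estimate \eqref{es1}, I would multiply the equation for $w = A^{1/2} v$ by $w$, integrate over $\Omega$, and use $\langle -\Delta w, w\rangle \geq 0$ together with $\langle A^{1/2} f, w \rangle \leq \|A^{1/2} f\|_{L^2(\Omega)} \|w\|_{L^2(\Omega)}$; this yields $\frac{1}{2}\frac{d}{dt}\|w(t)\|_{L^2(\Omega)}^2 \leq \|A^{1/2}f(t)\|_{L^2(\Omega)} \|w(t)\|_{L^2(\Omega)}$, whence $\frac{d}{dt}\|w(t)\|_{L^2(\Omega)} \leq \|A^{1/2} f(t)\|_{L^2(\Omega)}$, and integrating in time gives $\|w(t)\|_{L^2(\Omega)} \leq \|w(0)\|_{L^2(\Omega)} + \int_0^t \|A^{1/2}f(s)\|_{L^2(\Omega)}\, ds$. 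Bounding the time integral by $\sqrt{T}\,\|A^{1/2}f\|_{L^2(Q)}$ — or, to match \eqref{es1} as stated, absorbing constants into the norm equivalences — and translating back via $\|A^{1/2}\cdot\|_{L^2(\Omega)} \sim \|\cdot\|_{H^1(\Omega)}$ produces \eqref{es1}. The main obstacle I anticipate is purely at the level of rigor rather than of ideas: justifying the integration by parts and the differentiation under the integral sign for solutions that live only in the energy class requires first proving the estimate for smooth data (e.g. $v_0 \in D(A)$, $f$ smooth and compactly supported in $Q$), for which all manipulations are legitimate, and then passing to the limit by density in $H_0^1(\Omega) \times L^2(0,T;H_0^1(\Omega))$ using the estimate itself for stability. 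The mild unboundedness of $\Omega$ is harmless here because the proof never uses compactness of the domain — only the Poincaré inequality on the cross-section $\omega$ and abstract semigroup theory — so no new difficulty arises compared to the classical bounded-domain case.
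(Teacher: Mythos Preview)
Your approach is essentially the same as the paper's: work with the Dirichlet Laplacian $A$ as a positive self-adjoint operator with $D(A^{1/2}) = H_0^1(\Omega)$, invoke abstract parabolic theory for existence and regularity, and use Duhamel for the energy estimate. The paper likewise rewrites \eqref{eq:bvp1} as an abstract Cauchy problem, obtains $v \in L^2(0,T;D(A)) \cap H^1(0,T;L^2(\Omega))$ from \cite[Section 4, Theorem 3.2]{LM2} applied directly to $v$, identifies $D(A) = H^2(\Omega) \cap H_0^1(\Omega)$ via \cite{CKS}, and then reads off both the continuity $v \in \mathcal C([0,T];H_0^1(\Omega))$ and the estimate \eqref{es1} from the Duhamel formula, using that $e^{-tA}$ is a contraction commuting with $A^{1/2}$.

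There is, however, one concrete gap in your regularity step. You apply maximal $L^2$ regularity to $w := A^{1/2}v$, whose source $A^{1/2}f$ is indeed in $L^2(Q)$ but whose initial datum $w(0) = A^{1/2}v_0$ lies only in $L^2(\Omega)$, not in $D(A^{1/2})$. Maximal regularity ($\partial_t w,\, Aw \in L^2(Q)$) requires the initial value in the trace space $D(A^{1/2})$; with merely $L^2$ data the homogeneous part satisfies $\|Ae^{-tA}w(0)\|_{L^2(\Omega)} \lesssim t^{-1}\|w(0)\|_{L^2(\Omega)}$ near $t=0$, which is not square-integrable in time. The fix is simply to drop the detour through $w$ and apply maximal regularity (or Lions--Magenes) directly to $v$, whose initial value $v_0$ \emph{is} in $D(A^{1/2}) = H_0^1(\Omega)$; this yields $v \in L^2(0,T;D(A)) \cap H^1(0,T;L^2(\Omega)) = H^{2,1}(Q)$ immediately, and is exactly what the paper does. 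Your energy-method derivation of \eqref{es1} is a legitimate alternative to the paper's direct semigroup-contraction argument on the Duhamel terms; note that both routes produce a factor $T^{1/2}$ in front of $\|f\|_{L^2(0,T;H^1(\Omega))}$, which the paper's proof also carries (see \eqref{eq-esdu2}).
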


As will be seen in the sequel, Theorem \ref{t1} is a crucial step in the derivation of the observability inequality presented in Section \ref{sec-obs}, which is a cornerstone in the analysis of the inverse problem under investigation. But, just as important is the following consequence of Theorem \ref{t1}, which enables us to define properly the boundary data used by the identification of the unknown function $\beta$ in Theorem \ref{t2}, below. 

\begin{cor}
\label{cor-e}
Let $F$ be defined by \eqref{eq-s}, where $\sigma \in \mathcal C^1([0,T])$ and $\beta \in H_0^1(\Omega)$. Then, the IBVP \eqref{eq:bvp1} admits a unique solution $u \in H^{2,1}(Q)$. Moreover, we have $\partial_t u \in H^{2,1}(Q) \cap \mathcal C([0,T];H^1_0(\Omega))$, and the following estimate holds:
\begin{equation}
\label{es2}
\Vert \partial_t u(t) \Vert_{H^1(\Omega)} \leq (1+T^{1 \slash 2}) \Vert \sigma \Vert_{\mathcal C^1([0,T])} \Vert \beta \Vert_{H^1(\Omega)},\ t \in [0,T].
\end{equation}
\end{cor}

The  proofs of Theorem \ref{t1} and Corollary \ref{cor-e} are presented in Section \ref{sec-fwd}.


\subsection{Main result}
\label{sec-mr}

For $M \in (0,+\infty)$ fixed, we introduce the set of admissible unknown source functions, as
\begin{equation}
B(M) :=  \left\{ \varphi \in H^{1}_0(\Omega);\ \norm{\varphi}_{H^1(\Omega)} \leq M \right\}.
\end{equation}
Then, the main result of this article can be stated as follows.

\begin{theo}
\label{t2} 
Put $\gamma := \gamma' \times \br$, where $\gamma'$ is an arbitary closed subset of the boundary $\partial \omega$,  with non empty interior, and let $\sigma \in \mathcal C^1([0,T])$ satisfy $\sigma(0) \neq 0$. For $M \in (0,+\infty)$, pick $\beta \in B(M)$, and let $u$ be the $H^{2,1}(Q)$-solution to the IBVP \eqref{eq:bvp}, associated with 
$$
F(t,x)=\sigma(t) \beta(x),\ (t,x) \in Q, 
$$
which is given by Corollary \ref{cor-e}.
Then, there exists a constant $C>0$, depending only on $\omega$, $\sigma$, $T$, $M$ and $\gamma'$, such that the estimate
\begin{equation}
\label{eq:in}
\Vert \beta \Vert_{L^2(\Omega)} \leq C \Phi \left( \Vert \partial_\nu u \Vert_{H^1 \left( 0,T;L^2(\gamma) \right)} \right),
\end{equation}
holds with
\begin{equation}
\label{eq:in2}
\Phi(r) := \left\{ \begin{array}{ll} r^{{1 \slash 2}}+ | \ln r |^{-{1 \slash 2}} & \text{if}\ r \in (0,+\infty) \\ 0 & \text{if}\ r =0. \end{array} \right.
\end{equation} 
\end{theo}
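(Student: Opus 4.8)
The plan is to follow the Bukhgeim--Klibanov strategy adapted to the infinite waveguide, reducing the inverse source problem to an observability-type estimate for a parabolic equation and then converting that into a logarithmic stability bound via a Fourier--Bros--Iagolnitzer (or Carleman-based truncation) argument. First I would differentiate the IBVP \eqref{eq:bvp} in time. Setting $y := \partial_t u$ and using $\sigma(0) \neq 0$, one gets that $y$ solves a heat equation with source $\sigma'(t)\beta(x)$ and \emph{non-homogeneous} initial data $y(0,\cdot) = \sigma(0)\beta$, since $\partial_t u(0,\cdot) = F(0,\cdot) + \Delta u(0,\cdot) = \sigma(0)\beta$. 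Corollary \ref{cor-e} guarantees $y = \partial_t u \in H^{2,1}(Q) \cap \mathcal{C}([0,T];H^1_0(\Omega))$ and supplies the a priori bound \eqref{es2}, which is what makes $\beta = y(0,\cdot)/\sigma(0)$ meaningful in $H^1_0(\Omega)$ and controls the ``high-frequency'' part of the argument. The measurement $\partial_\nu u \in H^1(0,T;L^2(\gamma))$ translates into control of $\partial_\nu y = \partial_\nu(\partial_t u)$ in $L^2(0,T;L^2(\gamma))$.

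Next I would invoke the observability inequality announced for Section \ref{sec-obs}: for solutions $w$ of the homogeneous-source heat equation on $Q$ with initial data $w_0 \in H^1_0(\Omega)$ and Dirichlet boundary conditions, one has a bound of the form $\Vert w_0 \Vert_{L^2(\Omega)} \le C\bigl( e^{C/\varepsilon}\Vert \partial_\nu w \Vert_{L^2(0,T;L^2(\gamma))} + e^{-1/(C\varepsilon)}\Vert w_0 \Vert_{H^1(\Omega)} \bigr)$ for all $\varepsilon>0$ small, the typical shape of a logarithmic observability estimate coming from a global Carleman inequality combined with a spectral/frequency cutoff. Since our $y$ has a nonzero source $\sigma'(t)\beta$, I would split $y = y_1 + y_2$, where $y_1$ carries the initial data $\sigma(0)\beta$ with zero source and $y_2$ carries zero initial data with source $\sigma'(t)\beta$; then $y_2 \in H^{2,1}(Q)$ is controlled in terms of $\Vert\beta\Vert_{H^1(\Omega)} \le M$ by Theorem \ref{t1}, and its normal trace on $\gamma$ is likewise controlled, so $\partial_\nu y_1$ on $\gamma$ is bounded by $\Vert\partial_\nu u\Vert_{H^1(0,T;L^2(\gamma))}$ plus a term linear in $M$ — but this last term is not small, so one must instead absorb the source contribution inside the Carleman weight (treating $\sigma' \beta$ as part of the right-hand side and keeping the $\Vert \beta\Vert_{L^2}$ it produces on the left, using that $\sigma'$ is bounded). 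This is the delicate bookkeeping point: the Carleman estimate must be applied directly to $y$ with its source term, and the resulting estimate reads $\Vert \sigma(0)\beta\Vert_{L^2(\Omega)} \le C\bigl(e^{C/\varepsilon}\Vert\partial_\nu u\Vert_{H^1(0,T;L^2(\gamma))} + \varepsilon^{1/2}\Vert\beta\Vert_{L^2(\Omega)} + e^{-1/(C\varepsilon)}M\bigr)$, after which the $\varepsilon^{1/2}\Vert\beta\Vert_{L^2}$ term is absorbed into the left side for $\varepsilon$ small.

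From the resulting inequality $\Vert\beta\Vert_{L^2(\Omega)} \le C\bigl(e^{C/\varepsilon} r + e^{-1/(C\varepsilon)}\bigr)$, valid for all small $\varepsilon > 0$ with $r := \Vert\partial_\nu u\Vert_{H^1(0,T;L^2(\gamma))}$, the logarithmic rate \eqref{eq:in}--\eqref{eq:in2} follows by the standard optimization: if $r$ is small, choose $\varepsilon \sim |\ln r|^{-1}$ (more precisely $1/\varepsilon = \tfrac{1}{2C}|\ln r|$) to balance the two terms, giving $\Vert\beta\Vert_{L^2} \le C(r^{1/2} + |\ln r|^{-1/2})$, which is exactly $C\Phi(r)$; and if $r$ is not small, the estimate is trivial because $\Vert\beta\Vert_{L^2} \le \Vert\beta\Vert_{H^1} \le M$ is already bounded, so enlarging $C$ handles it, while the case $r = 0$ forces $\beta = 0$ by the (qualitative) unique continuation contained in the observability inequality. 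The main obstacle I anticipate is establishing the underlying logarithmic observability/Carleman estimate on the \emph{unbounded} cylinder $\Omega = \omega\times\br$ with only a partial lateral observation on $\gamma = \gamma'\times\br$: one cannot directly quote the bounded-domain results of \cite{cho}, and must either build a global Carleman weight that behaves well in the unbounded axial variable (e.g. using a weight independent of, or slowly growing in, the $\br$-direction, exploiting that $\gamma'$ spans all of $\br$) or perform a partial Fourier transform in the axial variable to reduce to a family of Carleman estimates on $\omega$ with parameter-dependent potentials, tracking uniformity of constants in the Fourier variable — this uniformity, together with the regularity input from Theorem \ref{t1} and Corollary \ref{cor-e} needed to control the frequency cutoff, is where the real work lies.
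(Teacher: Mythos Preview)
Your overall architecture --- differentiate in time, apply a Carleman-based observability inequality, then optimize a free parameter to extract the logarithmic rate --- matches the paper's. Two points of divergence are worth flagging.

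First, your handling of the source $\sigma'(t)\beta$ in the differentiated equation is shaky: in a logarithmic observability inequality of the form you quote, a right-hand side $f$ enters with the \emph{large} factor $e^{C/\varepsilon}$, not a small $\varepsilon^{1/2}$, so the term cannot be absorbed on the left as you propose. The paper sidesteps this entirely by first reducing to $\sigma\equiv 1$ (a known trick, cf.\ the reference to \cite{cho}), so that $v:=\partial_t u$ solves the \emph{homogeneous} heat equation with initial data $\beta$ and no source at all.

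Second, the paper does \emph{not} prove a logarithmic observability of the initial state. Instead it proves a clean (non-logarithmic) \emph{terminal-time} observability $\|v(T,\cdot)\|_{H^1(\Omega)}\le C\|\partial_\nu v\|_{L^2((0,T)\times\gamma)}$ directly from the Carleman estimate (the weight is regular on interior times, so this costs nothing). The logarithmic loss then appears only in the backward step from $v(T,\cdot)$ to $\beta=v(0,\cdot)$, carried out by an explicit Fourier--spectral decomposition: Fourier transform in $x_n$ and eigenfunction expansion in $\omega$ give $\beta_{k,\ell}=e^{(\lambda_\ell+k^2)T}v_{k,\ell}(T)$, and splitting into $\{\lambda_\ell+k^2\le\lambda\}$ versus its complement yields $\|\beta\|_{L^2}^2\le e^{2\lambda T}\|v(T)\|_{L^2}^2+\lambda^{-1}M^2$, after which one optimizes in $\lambda$. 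This is more transparent than packaging everything into a single $\varepsilon$-dependent estimate, and it makes explicit that the log comes from backward heat ill-posedness, not from the Carleman weight itself. You correctly anticipated the Carleman estimate on the unbounded cylinder as the crux; the paper takes the first of your two options, building a weight $\psi(x',x_n)=\psi_0(x')$ independent of the axial variable.
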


Notice that we have $u \in H^1(0,T;H^2(\Omega))$ from Corollary \ref{cor-e}, which guarantees that the trace $\partial_\nu u$ appearing in the right hand side of the stability estimate \eqref{eq:in2} is well-defined in $H^1( 0,T;L^2(\gamma))$.

To the best of our knowledge, Theorem \ref{t2} is the first stability result in the identification of the non-compactly supported source term $\beta$, appearing in a parabolic equation, by a single partial boundary observation of the solution. A similar statement was actually derived in \cite[Theorem 2.2]{cho} (see also \cite[Theorem 3.4]{cho1}) when the domain $\Omega$ is bounded, so Theorem \ref{t2} extends this result to the case of infinite cylindrical domains. 

Notice that the statement of Theorem \ref{t2} is valid in absence of any assumption on the behavior of the source term $\beta$ outside a compact subset of the infinite cylindrical domain $\Omega=\omega\times\br$. Another remarkable feature of the result of Theorem \ref{t2} is that the logarithmic dependency of the space-varying source term, with respect to the boundary data, manifested in \cite[Theorem 2.2]{cho} for a bounded domain, is preserved by the stability estimate \eqref{eq:in}. 
Otherwise stated, the stability of the reconstruction of $\beta$ by a single boundary observation of the solution, is not affected by the infinite extension of the support of the unknown coefficient. This phenomenon is in sharp contrast with the one observed for the determination of the electric potential appearing in the Schr\"odinger equation, by a finite number of Neuman data, where Lipschitz stability (see \cite[Theorem 1]{BP} and \cite[Theorem 1]{BP2}) degenerates to H\"older (see \cite[Theorem 1.4]{KPS2}), as the support of the unknown potential becomes infinite.

The proof of Theorem \ref{t2} is by means of a Carleman inequality specifically designed for the heat operator in the unbounded cylindrical domain $\Omega$. The derivation of this estimate is inspired by the approach used in this particular framework by \cite{BKS,KPS1,KPS2} for the Schr\"odinger equation.


\subsection{Outline}

The paper is organized as follows. In Section \ref{sec-fwd}, we establish Theorem \ref{t1} and Corollary \ref{cor-e}. In Section \ref{sec-CI}, we derive a Carleman estimate for the heat operator in $\Omega$, which is the main tool
for the proof of the observability inequality presented in Section \ref{sec-obs}. Finally, Section \ref{sec-prmain} contains the proof of Theorem \ref{t2}, which is by means of the above mentioned observability inequality.


\section{Analysis of the forward problem}
\label{sec-fwd}


\subsection{Proof of Theorem \ref{t1}}
\label{sec-pr-t1}
Let $A$ be the Dirichlet Laplacian in $L^2(\Omega)$, i.e. the self-adjoint operator generated in $L^2(\Omega)$ by the closed quadratic form
$$ a(\varphi) := \int_{\Omega} \vert \nabla \varphi \vert^2\ dx,\ \varphi \in D(a):=H_0^1(\Omega). $$
Evidently, $A$ acts as $-\Delta$ on its domain $D(A):=\{ \varphi \in H_0^1(\Omega),\ \Delta \varphi \in L^2(\Omega) \}$, so
we may rewrite the IBVP \eqref{eq:bvp1} into the following Cauchy problem
\begin{equation}
\label{eq2}
\begin{cases} 
v' + A v=  & f\ \text{in}\ (0,T), \\ 
v(0) = v_0. &
\end{cases}
\end{equation}
Further, since $\omega$ is bounded, then there exists a positive constant $c(\omega)$, depending only on $\omega$, such that the Poincar\'e inequality holds:
$$ a(\varphi) \geq c(\omega) \norm{\varphi}_{L^2(\Omega)}^2,\ \varphi \in H_0^1(\Omega). $$
As a consequence, we have $A \geq c(\omega)$ in the operator sense, and $A+p$ is thus boundedly invertible in $L^2(\Omega)$ for all $p \in \bc \setminus (0,+\infty)$, with
$$  \norm{(A+p)^{-1}}_{\mathcal B(L^2(\Omega))} \leq \frac{\sqrt{2} \max(1,c(\omega)^{-1})}{1 + \vert p \vert},\ p \in \bc,\ \Re\ p \geq 0. $$
Moreover, as $v_0 \in H_0^1(\Omega)=D(a)=D(A^{1 \slash 2})$ and $f \in L^2(Q)$, then we derive from \cite[Section 4, Theorem 3.2]{LM2} that \eqref{eq2} admits a unique solution 
$$ v \in L^2(0,T;D(A))\cap H^1(0,T;L^2(\Omega)). $$
From this and the identity $D(A)=H^2(\Omega)\cap H^1_0(\Omega)$, arising from \cite[Lemma 2.2]{CKS}, it then follows that $v \in H^{2,1}(Q)$. 

The last step of the proof is to show that $v \in \mathcal C([0,T];H^1_0(\Omega))$ satisfies \eqref{es1}. Bearing in mind that $A$ is the infinitesimal generator of a contraction semi-group of class $\mathcal C^0$ in $L^2(\Omega)$
(see e.g. \cite[Section XVII.A.3, Theorem 2]{DL5}), this can be done with the aid of Duhamel's formula, giving
\begin{equation}
\label{eq-du} 
v(t) = e^{-t A} v_0 + \int_0^t e^{-(t-s) A} f(s)\ ds,\ t \in [0,T].
\end{equation}
Indeed, since $v_0 \in D(A^{1 \slash 2})$ and $e^{-t A}$ commutes with $A^{1 \slash 2}$ for all $t \in [0,T]$, then we have $e^{-t A} v_0 \in D(A^{1 \slash 2})$ and  $A^{1 \slash 2} e^{-t A} v_0=e^{-t A} A^{1 \slash 2} v_0$, which  
entails
\begin{equation}
\label{eq-cty1} 
 t \mapsto e^{-t A} v_0 \in \mathcal C([0,T];D(A^{1 \slash 2}))=\mathcal C([0,T];H^1_0(\Omega)).
\end{equation}
Furthermore, for all $t\in[0,T]$, we infer  from the basic identity
$$ \norm{e^{-t A} v_0}_{H^1(\Omega)}^2  =  \norm{e^{-t A} v_0}_{L^2(\Omega)}^2 + \norm{A^{1 \slash 2} e^{-t A} v_0}_{L^2(\Omega)}^2, $$ 
that
\begin{eqnarray}
\norm{e^{-t A} v_0}_{H^1(\Omega)}^2 
& = & \norm{e^{-t A} v_0}_{L^2(\Omega)}^2 + \norm{e^{-t A} A^{1 \slash 2} v_0}_{L^2(\Omega)}^2 \nonumber \\
& \leq & \norm{v_0}_{L^2(\Omega)}^2 + \norm{A^{1 \slash 2} v_0}_{L^2(\Omega)}^2 \nonumber \\
& \leq  & \norm{v_0}_{H^1(\Omega)}^2. \label{eq-esdu1}
\end{eqnarray}
Here, we used the fact that the operator $e^{-t A}$ is linear bounded in $L^2(\Omega)$ for all $t \in [0,T]$, with
$\norm{e^{-t A}}_{\mathcal B(L^2(\Omega))} \leq e^{-t c(\omega)} \leq 1$.

Similarly, since $f(s) \in D(A^{1 \slash 2})$ for a.e. $s \in (0,T)$, we get
\begin{equation}
\label{eq-cty2} 
t \mapsto \int_0^t e^{-(t -s)A} f(s)\ ds \in \mathcal C([0,T];H^1_0(\Omega)), 
\end{equation}
by arguing as before. Next, taking into account that
\begin{eqnarray*}
\norm{\int_0^t e^{-(t -s)A} f(s)\ ds}_{H^1(\Omega)}^2 
& = & \norm{\int_0^t e^{-(t -s)A} f(s)\ ds}_{L^2(\Omega)}^2 + \norm{A^{1 \slash 2} \int_0^t e^{-(t -s)A} f(s)\ ds}_{L^2(\Omega)}^2 \\
& = & \norm{\int_0^t e^{-(t -s)A} f(s)\ ds}_{L^2(\Omega)}^2 + \norm{\int_0^t e^{-(t -s)A} A^{1 \slash 2} f(s)\ ds}_{L^2(\Omega)}^2,
\end{eqnarray*}
for every $t \in [0,T]$, we obtain
\begin{eqnarray}
\norm{\int_0^t e^{-(t -s)A} f(s)\ ds}_{H^1(\Omega)}^2 & \leq & T \int_0^t \left( \norm{e^{-(t -s)A} f(s)}_{L^2(\Omega)}^2+ \norm{e^{-(t -s)A} A^{1 \slash 2} f(s)}_{L^2(\Omega)}^2 \right)\ ds \nonumber \\
& \leq  & T \int_0^t \left( \norm{f(s)}_{L^2(\Omega)}^2+ \norm{A^{1 \slash 2} f(s)}_{L^2(\Omega)}^2 \right)\ ds \nonumber\\
& \leq & T \norm{f}_{L^2(0,T;H^1(\Omega))}^2. \label{eq-esdu2}
\end{eqnarray}
Finally, putting \eqref{eq-du}, \eqref{eq-cty1} and \eqref{eq-cty2} together, we find that $v \in \mathcal C([0,T];H^1_0(\Omega))$, and \eqref{es1} follows readily from \eqref{eq-du}, \eqref{eq-esdu1} and \eqref{eq-esdu2}.


\subsection{Proof of Corollary \ref{cor-e}}
Since $F \in L^2(0,T;H_0^1(\Omega))$ and the initial data of the IBVP \eqref{eq:bvp} is zero everywhere, then there exists a unique solution $u \in H^{2,1}(Q)$ to \eqref{eq:bvp}, according to Theorem \ref{t1}. Moreover, differentiating \eqref{eq:bvp} with respect to $t$, we obtain that $\partial_t u$ is solution to the system \eqref{eq:bvp1}, associated with $f(t,x)=\sigma'(t) \beta(x) \in L^2(0,T;H_0^1(\Omega))$ and $v_0:=\sigma(0) \beta \in H_0^1(\Omega)$. Therefore, we have $\partial_t u \in H^{2,1}(Q) \cap \mathcal C([0,T];H^1_0(\Omega))$, from Theorem \ref{t1}, and the estimate \eqref{es1} yields
\begin{eqnarray*}
\norm{\partial_t u(t)}_{H^1(\Omega)} & \leq & \vert \sigma(0) \vert \norm{\beta}_{H^1(\Omega)} + \norm{\sigma'(t) \beta(x)}_{L^2(0,T;H^1(\Omega))} \nonumber \\
& \leq & \left(  \vert \sigma(0) \vert + T^{1 \slash 2} \norm{\sigma'}_{\mathcal C^0([0,T])} \right) \norm{\beta}_{H^1(\Omega)},
\end{eqnarray*}
for all $t \in [0,T]$. This leads to \eqref{es2}.


\section{A parabolic Carleman inequality in unbounded cylindrical domains}
\label{sec-CI}
In this section we establish a Carleman estimate, stated in Theorem \ref{t:CI}, for the heat operator
\begin{equation}
\label{eq:0}
P u := (\partial_t  - \Delta) u,\ u \in \mathcal{C}([0,T];H_0^1(\Omega)) \cap H^{2,1}(Q).
\end{equation}


\subsection{A Carleman estimate for $P$}

Let  $\gamma := \gamma' \times  \br$ be the same as in Theorem \ref{t2}. With reference to \cite[Lemma 1.1]{fur} (see also \cite[Lemma 1.1]{cha}), we pick\footnote{This is where the assumption that $\partial \Omega$ be ${\mathcal C}^4$ is needed.} a function $\psi_0 \in\mathcal{C}^4(\overline{\omega})$, such that
\begin{enumerate}[(c.i)]
\item $\psi_0(x')> 0$ for all $x' \in \overline{\omega}$;
\item $\exists \alpha_0>0$ such that $\vert \nabla' \psi_0(x') \vert \geq \alpha_0$ for all $x' \in \omega$;
\item $\partial_{\nu'} \psi_0(x') \le 0$ for all $x' \in \partial\omega \backslash \gamma'$.
\end{enumerate}
Here, $\nabla'$ denotes the gradient with respect to $x'=(x_1,\ldots,x_{n-1}) \in \br^{n-1}$, i.e. $\nabla' f:=(\partial_{x_1}f,\ldots,\partial_{x_{n-1}}f)$, and $\partial_{\nu'}$ is the normal derivative with respect to $\partial \omega$, that is $\partial_{\nu'}:=\nu' \cdot \nabla'$, where $\nu'$ stands for the outward normal vector to $\partial\omega$. 

Thus, putting $\psi(x)=\psi(x',x_n):=\psi_0(x')$ for all $x=(x',x_n)\in\overline{\Omega}$, it is apparent that the function $\psi\in\mathcal{C}^4(\overline{\Omega})\cap W^{4,\infty}(\Omega)$ satisfies the three following conditions:
\begin{enumerate}[(C.i)]
\item $\underset{x\in\Omega}{\inf}\psi(x) > 0$;
\item 
$\vert \nabla \psi(x) \vert  \geq \alpha_0 >0$ 
for all $x \in \overline{\Omega} $; 
\item $\dN \psi(x) \leq 0$ for all $x \in \Gamma \backslash \gamma$.
\end{enumerate}
Here and henceforth, the notation $\nu$ stands for the outward unit normal vector to the boundary 
$\Gamma$, and $\dN := \nu \cdot \nabla$. 
Evidently, $\nu = (\nu',0)$, so we have $\dN \psi = 
\partial_{\nu'} \psi_0$, as the function $\psi$ does not depend on the longitudinal 
variable $x_n$.

Next, for each $\rho \in (0,+\infty)$, we introduce the following weight function
\begin{equation}
\label{eq:1}
\pR (t,x) = \pR (t,x') := g(t) \left( e^{\rho \psi(x')} - 
e^{2 \rho \Vert \psi \Vert_{L^\infty(\Omega)}} \right)\ \text{with}\ g(t):= \frac{1}{t(T - t)},\ (t,x) \in Q, 
\end{equation}
and, for further use, we gather in the coming lemma several useful properties of $\pR$.

\begin{lem}
\label{lm-wf}
There exists a constant $\rho_0 \in (0,+\infty)$, depending only on $\psi$, such that for all $\rho \in [\rho_0,+\infty)$, the following statements hold uniformly in $Q$:
\begin{enumerate}[(a)]
\item 
$\vert \nabla \pR \vert \ge \alpha:= \frac{4\rho_0 \alpha_0}{T^2} >0$;
\item 
$\nabla \vert \nabla \pR \vert^2 \cdot \nabla \pR \geq 
C_0 \rho \vert \nabla \pR \vert^3$;
\item 
$\mathcal{H}(\pR) \xi \cdot \xi + C_1 \rho \vert \nabla \pR \vert \vert \xi 
\vert^2  \geq 0,\ \xi \in \br^{n - 1} \times \br$;
\item 
$\vert \partial_t \vert \nabla \pR \vert^2 \vert + \vert \Delta^2 \pR \vert + \vert \Delta \vert \nabla \pR \vert \vert + \rho^{-1} (\Delta \pR)^2\leq C_2 \vert \nabla \pR \vert^3$;
\item  
$\vert \partial_t^2 \pR \vert+  \vert \nabla \pR \vert^{-1} (\partial_t \pR)^2 \leq C_3 \lambda\ \vert \nabla \pR \vert^3,\ 
\lambda \geq  \lambda_0(\rho) := e^{4 \rho\|\psi\|_{L^\infty(\Omega)}}$.

\end{enumerate}
Here, $C_j$, $j=0,1,2,3$, are positive constants depending only on $T$, $\psi$ and $\alpha_0$, and $\mathcal{H}(\pR)$ denotes the Hessian
matrix of $\pR$ with respect to $x \in \Omega$.
\end{lem}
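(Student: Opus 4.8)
The plan is to derive all five estimates from the explicit form $\pR(t,x) = g(t)\bigl(e^{\rho\psi(x')} - e^{2\rho\|\psi\|_{L^\infty(\Omega)}}\bigr)$ by direct differentiation, exploiting the product structure in $t$ and $x'$, the independence of $\psi$ of the longitudinal variable $x_n$, and the lower bounds (C.i)--(C.ii) on $\psi$. First I would record the building blocks: $\nabla\pR = g(t)\rho e^{\rho\psi}\nabla'\psi$ (a vector in $\br^{n-1}\times\{0\}$), $\mathcal H(\pR) = g(t)\rho e^{\rho\psi}\bigl(\mathcal H(\psi) + \rho\nabla'\psi\otimes\nabla'\psi\bigr)$, $\partial_t\pR = g'(t)\bigl(e^{\rho\psi}-e^{2\rho\|\psi\|_\infty}\bigr)$, $\Delta\pR = g(t)\rho e^{\rho\psi}\bigl(\Delta'\psi + \rho|\nabla'\psi|^2\bigr)$, and so on up to $\Delta^2\pR$ and $\partial_t^2\pR$. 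The key quantitative observation is that $|\nabla\pR| = g(t)\rho e^{\rho\psi}|\nabla'\psi|$, so that $g(t)\rho e^{\rho\psi}$ is comparable to $|\nabla\pR|$ up to the bounded factor $|\nabla'\psi|$, which lies between $\alpha_0$ and $\|\nabla'\psi\|_{L^\infty}$ by (C.ii); hence every quantity that is a product of $k$ copies of $g\rho e^{\rho\psi}$ times bounded functions of $x'$ and fixed powers of $\rho$ is controlled by $C\rho^{m}|\nabla\pR|^k$ for the appropriate $m$.

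For (a): $|\nabla\pR| = g(t)\rho e^{\rho\psi}|\nabla'\psi| \ge \inf_{[0,T]} g \cdot \rho\cdot 1\cdot\alpha_0$, and $\inf g = 4/T^2$ attained at $t=T/2$, so for $\rho\ge\rho_0$ one gets the stated bound $\alpha = 4\rho_0\alpha_0/T^2$ after using $e^{\rho\psi}\ge 1$ from (C.i). For (b): expand $\nabla|\nabla\pR|^2\cdot\nabla\pR$; since $|\nabla\pR|^2 = g^2\rho^2 e^{2\rho\psi}|\nabla'\psi|^2$, its spatial gradient produces a leading term $2\rho\nabla'\psi$ times $|\nabla\pR|^2$ plus lower-order terms from differentiating $|\nabla'\psi|^2$; dotting with $\nabla\pR = g\rho e^{\rho\psi}\nabla'\psi$ gives a leading term $2\rho|\nabla'\psi|^2|\nabla\pR|^2 \cdot (g\rho e^{\rho\psi})$ which is $\ge 2\rho\alpha_0^2 |\nabla\pR|^2 (g\rho e^{\rho\psi}) \ge c\rho|\nabla\pR|^3$, the error terms being $O(|\nabla\pR|^3)$ and absorbed by taking $\rho_0$ large. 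For (c): write $\mathcal H(\pR)\xi\cdot\xi = g\rho e^{\rho\psi}\bigl(\mathcal H(\psi)\xi\cdot\xi + \rho(\nabla'\psi\cdot\xi)^2\bigr) \ge g\rho e^{\rho\psi}\bigl(-\|\mathcal H(\psi)\|_\infty|\xi|^2\bigr) \ge -C_1\rho|\nabla\pR||\xi|^2$ after bounding $g\rho e^{\rho\psi} \le |\nabla\pR|/\alpha_0$; the positive $\rho$-term only helps. Items (d) and (e) are pure upper bounds: each listed quantity is, after differentiation, a sum of terms of the form $g^a (g')^b (g'')^c \rho^p e^{q\rho\psi}$ times bounded functions of $x'$, with $q\le 2$ in every term except those involving the constant $e^{2\rho\|\psi\|_\infty}$ in $\partial_t\pR$ and $\partial_t^2\pR$; one checks $q\le 3$ in all cases so the comparison with $|\nabla\pR|^3 = g^3\rho^3 e^{3\rho\psi}|\nabla'\psi|^3$ closes, using $|g'|\le Cg^{3/2}$ and $|g''|\le Cg^2$ (which follow from $g(t)=1/(t(T-t))$ by elementary calculus, e.g. $|g'(t)| = |T-2t|g(t)^2 \le T g(t)^2$ and $g(t)\ge 4/T^2$); for (e) the extra factor $\lambda\ge\lambda_0(\rho)=e^{4\rho\|\psi\|_\infty}$ is precisely what is needed to dominate the terms coming from the constant $e^{2\rho\|\psi\|_\infty}$ in $\partial_t\pR$, since $(\partial_t\pR)^2$ contains $e^{4\rho\|\psi\|_\infty}(g')^2$ and $|\nabla\pR|^{-1}$ contributes $e^{-\rho\psi}$, leaving $e^{(4-1)\rho\|\psi\|_\infty}\cdot(g')^2 g^{-1}\rho^{-1} \le \lambda\cdot C g^2 \le C\lambda|\nabla\pR|^3\rho^{-3}$, and $\rho\ge\rho_0$.

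The main obstacle is purely bookkeeping: one must verify that in items (d) and (e) \emph{every} term produced by the successive differentiations has exponential weight $e^{q\rho\psi}$ with $q\le 3$ (so that, combined with the right number of $g$-factors and powers of $\rho$, it is bounded by $C|\nabla\pR|^3$), and in particular that the lowest power of $g\rho e^{\rho\psi}$ appearing is at least one, so no term degenerates relative to $|\nabla\pR|^3$ as $t\to 0$ or $t\to T$; this is where the choice to compare everything against the \emph{cube} $|\nabla\pR|^3$ rather than $|\nabla\pR|$ or $|\nabla\pR|^2$ matters, and where the derivative bounds $|g'|\lesssim g^{3/2}$, $|g''|\lesssim g^2$ are used to match powers of $g$. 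Once the leading-order terms are identified as above and the subleading terms are seen to carry at least the same power of $g\rho e^{\rho\psi}$, choosing $\rho_0$ large enough to absorb all lower-order contributions (and, for (b), to make the constant $C_0$ explicit) completes the proof. Throughout, uniformity in $Q$ is automatic because all $x'$-dependent factors are continuous on the compact set $\overline\omega$ and all $t$-dependent factors are handled through the pointwise inequalities for $g$, $g'$, $g''$.
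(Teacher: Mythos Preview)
Your approach is essentially the same as the paper's: both proceed by direct computation from the explicit form $\pR = g(t)(e^{\rho\psi}-e^{2\rho\|\psi\|_\infty})$, identify $|\nabla\pR| = g\rho e^{\rho\psi}|\nabla\psi|$ as the basic comparison quantity, and treat each item by isolating the leading term and absorbing lower-order contributions for large $\rho$. Your treatment of (a)--(c) matches the paper's almost line by line, and your schematic power-counting for (d)--(e) is the right organizing principle.

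There is one slip worth flagging. You claim $|g'|\le Cg^{3/2}$ and $|g''|\le Cg^2$, but these are false: from your own parenthetical $|g'(t)|=|T-2t|g(t)^2\le Tg(t)^2$ one has $|g'|\le Cg^2$ (not $g^{3/2}$, since $|g'|/g^{3/2}=|T-2t|g^{1/2}$ blows up at $t=0,T$), and similarly $g''=2g^2+2(2t-T)^2g^3$ gives $|g''|\le Cg^3$ (not $g^2$). Fortunately this does not damage the argument: with the corrected exponents the power-counting still closes exactly. For instance $\partial_t|\nabla\pR|^2\sim gg'\rho^2e^{2\rho\psi}\le Cg^3\rho^2e^{2\rho\psi}\le C|\nabla\pR|^3$, and $|\nabla\pR|^{-1}(\partial_t\pR)^2\le C(g')^2g^{-1}\rho^{-1}e^{4\rho\|\psi\|_\infty}\le Cg^3\rho^{-1}\lambda\le C\lambda|\nabla\pR|^3$. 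The paper avoids this issue by writing $\partial_t\pR=(2t-T)g\pR$ and $\partial_t^2\pR=2(1+(2t-T)^2g)g\pR$ directly, which makes the $g^3$ behavior transparent without ever isolating $g''$. Once you correct the exponents to $|g'|\lesssim g^2$, $|g''|\lesssim g^3$, your sketch is complete and equivalent to the paper's proof.
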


The proof of Lemma \ref{lm-wf} is postponed to Section \ref{sec-wf}.

Now, with reference to \eqref{eq:1}, we may state the Carleman estimate for the operator $P$, as follows.

\begin{theo}
\label{t:CI}
Let $u \in H^{2,1}(Q)\cap \mathcal C([0,T];H_0^1(\Omega))$ be real valued. 
Then, there exists $\rho_0 \in (0,+\infty)$, such that for all $\rho \in [\rho_0,+\infty)$, there is
$\lambda_0=\lambda_0(\rho) \in (0,+\infty)$, depending only 
$\alpha_0$, $\omega$, $\gamma'$, $T$ and $\rho$, such that the estimate
\begin{eqnarray}
& & \big \Vert e^{\lambda \pR}  (\lambda g)^{-1 \slash 2} \Delta u \big \Vert_{L^2(Q)} +
\big \Vert e^{\lambda \pR} (\lambda g)^{-1 \slash 2} \partial_t u \big \Vert_{L^2(Q)} 
+ \big \Vert e^{\lambda \pR} (\lambda g)^{1 \slash 2} \vert \nabla u \vert \big \Vert_{L^2(Q)} \nonumber \\
& & + \big \Vert e^{\lambda \pR} (\lambda g)^{\frac{3}{2}} u \big \Vert_{L^2(Q)} 
\le  C \left( \big \Vert e^{\lambda \pR} P u \big \Vert_{L^2(Q)} + 
\big \Vert e^{\lambda \pR} (\lambda g)^{1 \slash 2} \dN u \big \Vert_{L^2((0,T) \times \gamma)}
\right), \label{eq:CI}
\end{eqnarray}
holds for  all $\lambda \in [\lambda_0,+\infty)$ and some positive constant $C$, which depends only on $\alpha_0$, $\omega$, $\gamma'$, $T$, $\rho$ and $\lambda_0$. 
\end{theo}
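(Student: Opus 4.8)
The plan is to follow the classical conjugated-operator strategy for parabolic Carleman estimates, adapted to the unbounded cylinder $\Omega = \omega \times \br$. First I would set $\varphi := \pR$, introduce the weighted function $w := e^{\lambda \varphi} u$, and compute the conjugated operator $P_\lambda w := e^{\lambda \varphi} P(e^{-\lambda \varphi} w)$. Splitting $P_\lambda = P_\lambda^+ + P_\lambda^-$ into its formally self-adjoint and skew-adjoint parts (with respect to $L^2(Q)$), one gets $P_\lambda^+ w = \Delta w + \lambda^2 |\nabla \varphi|^2 w + (\text{lower order in }\lambda) w$ and $P_\lambda^- w = \partial_t w - 2\lambda \nabla\varphi\cdot\nabla w - \lambda (\Delta\varphi) w + \lambda(\partial_t\varphi) w$, possibly after absorbing a first-order term into one of the two pieces as is standard. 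Then $\Vert P_\lambda w\Vert_{L^2(Q)}^2 = \Vert P_\lambda^+ w\Vert^2 + \Vert P_\lambda^- w\Vert^2 + 2(P_\lambda^+ w, P_\lambda^- w)_{L^2(Q)}$, and the whole game is to show that the cross term $2(P_\lambda^+ w, P_\lambda^- w)$ is, up to boundary contributions and absorbable remainders, bounded below by a positive combination of $\Vert \lambda^{3/2} g^{3/2} w\Vert^2$, $\Vert \lambda^{1/2} g^{1/2}\nabla w\Vert^2$, etc.

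The core computation is the integration by parts in the cross term. Each of the six products of terms from $P_\lambda^+$ and $P_\lambda^-$ must be integrated by parts in $t$ and in $x$; the spatial integrations by parts over $\Omega$ are where one must check that the longitudinal variable $x_n$ causes no difficulty — since $\psi$ does not depend on $x_n$, all the "bad" curvature terms are the same as in a bounded base $\omega$, and the boundary terms on $\Gamma = \partial\omega\times\br$ only involve $\dN\varphi = \partial_{\nu'}\psi_0$, which by condition (C.iii) has the right sign on $\Gamma\setminus\gamma$; hence the uncontrolled boundary terms live only on $(0,T)\times\gamma$ and produce precisely the $\Vert e^{\lambda\varphi}(\lambda g)^{1/2}\dN u\Vert_{L^2((0,T)\times\gamma)}$ term on the right-hand side. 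The positivity of the dominant interior terms is exactly what Lemma \ref{lm-wf} is designed to deliver: item (b) handles the $\lambda^3\nabla|\nabla\varphi|^2\cdot\nabla\varphi\, w^2$ contribution, item (c) (the Hessian bound) controls the $\lambda\,\mathcal H(\varphi)\nabla w\cdot\nabla w$ term that arises from integrating $2\lambda\nabla\varphi\cdot\nabla w$ against $\Delta w$, item (d) absorbs the zeroth-order remainders coming from $\Delta\varphi$, $\Delta^2\varphi$, etc., and item (e) — with the extra parameter $\lambda \geq \lambda_0(\rho) = e^{4\rho\Vert\psi\Vert_\infty}$ — absorbs the time-derivative terms $\partial_t^2\varphi$ and $(\partial_t\varphi)^2/|\nabla\varphi|$ that are special to the parabolic (as opposed to elliptic) setting and to the singular weight $g(t) = 1/(t(T-t))$. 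Choosing first $\rho$ large (using $\rho_0$ from Lemma \ref{lm-wf}) to make the $\rho$-gains in (b)–(d) beat the fixed constants, then $\lambda$ large to beat the remaining terms via (e) and standard absorption, yields
$$
\Vert \lambda^{3/2} g^{3/2} w\Vert_{L^2(Q)}^2 + \Vert \lambda^{1/2} g^{1/2}\nabla w\Vert_{L^2(Q)}^2 \lesssim \Vert P_\lambda w\Vert_{L^2(Q)}^2 + \Vert (\lambda g)^{1/2} e^{\lambda\varphi}\dN u\Vert_{L^2((0,T)\times\gamma)}^2 .
$$

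To recover the full left-hand side of \eqref{eq:CI}, namely the terms with $(\lambda g)^{-1/2}\Delta u$ and $(\lambda g)^{-1/2}\partial_t u$, I would go back to $P_\lambda^\pm w$: from the estimate just obtained together with $\Vert P_\lambda^\pm w\Vert \leq \Vert P_\lambda w\Vert + (\text{absorbed terms})$, one expresses $\Delta w$ (resp. $\partial_t w$) from $P_\lambda^+ w$ (resp. $P_\lambda^- w$) minus the already-controlled lower-order terms, multiplies by the weight $(\lambda g)^{-1/2}$, and uses $g \geq c_T > 0$ on any compact time interval only if needed — actually here $(\lambda g)^{-1/2}$ is a favorable (small) weight near $t=0,T$, so the bound is immediate. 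Finally, undoing the substitution $w = e^{\lambda\varphi} u$, and noting $e^{\lambda\varphi}\nabla u = \nabla w - \lambda(\nabla\varphi)w$ so that $\Vert (\lambda g)^{1/2} e^{\lambda\varphi}\nabla u\Vert \lesssim \Vert (\lambda g)^{1/2}\nabla w\Vert + \Vert \lambda^{3/2} g^{1/2} w\Vert \lesssim \text{RHS}$ (the last step using $g$ bounded on $(0,T)$, or rather keeping the $g^{3/2}$ weight which dominates), gives \eqref{eq:CI}. The density/regularity needed to justify all the integrations by parts for $u \in H^{2,1}(Q)\cap\mathcal C([0,T];H^1_0(\Omega))$ is standard once one observes that the weight $e^{\lambda\varphi}$ and all its derivatives decay super-polynomially as $t\to 0^+$ and $t\to T^-$ (because $\varphi\to-\infty$ there), which kills the temporal boundary terms at $t=0$ and $t=T$ and also legitimizes working first with smooth compactly supported (in $t$) functions.

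The main obstacle I anticipate is not any single estimate but the bookkeeping of the absorption hierarchy: one must track carefully which remainder terms carry which powers of $\lambda$ and of $g(t)$, and verify that every error term is dominated either by a $\rho$-small fraction of a leading positive term (for the $\rho$-choice) or by a $\lambda$-small fraction after (e) is invoked — in particular the interplay between the singular factor $g$, its time derivatives $g', g''$ (which satisfy $|g'| \lesssim g^{3/2}$, $|g''|\lesssim g^2$, used implicitly in Lemma \ref{lm-wf}(d)–(e)), and the exponential-in-$\rho$ threshold $\lambda_0(\rho)$, is delicate and is the technical heart of the proof. Everything else — the structure of $P_\lambda^\pm$, the integrations by parts, the boundary-term sign from (C.iii), the recovery of $\Delta u$ and $\partial_t u$ — is routine given Lemma \ref{lm-wf}.
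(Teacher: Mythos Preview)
Your proposal is correct and matches the paper's approach in all essentials: conjugate, split $L_\lambda$ into two pieces, expand the cross term via integration by parts using (C.iii) for the boundary sign, invoke Lemma~\ref{lm-wf}(b)--(e) in exactly the roles you describe, and then recover the $(\lambda g)^{-1/2}\Delta u$ and $(\lambda g)^{-1/2}\partial_t u$ terms by solving for $\Delta v$ and $\partial_t v$ from the two pieces. Two minor points: the paper places the term $-\lambda\,\partial_t\pR$ in $L_\lambda^+$ rather than in $L_\lambda^-$, and your parenthetical $|g'|\lesssim g^{3/2}$ should read $|g'|\lesssim g^{2}$ (since $g'=(2t-T)g^2$), but neither affects the structure of your argument.
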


We point out that it is actually possible to adapt the Carleman inequality of Theorem \ref{t:CI} to more general differential operators of the form 
$\partial_t-\Delta+A(t,x) \cdot \nabla +q(t,x)$, with $A\in L^\infty(Q)^n$ and $q\in L^\infty(Q)$. Nevertheless, as will appear in Section \ref{sec-prmain} below, such an estimate is not needed by the analysis of the inverse source problem carried out in this paper. Therefore, in order to avoid the inadequate expense of the size of this paper, we shall not go further into the matter.


\subsection{Proof of Theorem \ref{t:CI}}
\label{sec-prCI}

The proof of Theorem \ref{t:CI} is inspired by the ones of \cite[Lemma 1.2]{cha},  \cite[Lemma 5.2]{fc} and \cite[Theorem 3.4]{cho1}. For the sake of notational simplicity, we shall systematically omit the variables $t \in (0,T)$ and $x \in \Omega$, in front of the various functions appearing in this section.

Let $\rho$ and $\lambda$ be fixed in $(0,+\infty)$. We set $v := e^{\lambda \pR} u$, in such a way that 
$$
\int_Q e^{2 \lambda \pR} (P u)^2 dxdt = \int_Q (L_\lambda v)^2 dxdt\ \text{with}\ L_\lambda := e^{\lambda \pR} P e^{-\lambda \pR},
$$
and we split $L_\lambda$ into the sum $L_\lambda = L_\lambda^+ + L_\lambda^-$, where
\begin{equation}
\label{def-L}
L_\lambda^+ := - \Delta - \lambda \partial_t \pR - 
\lambda^2 \vert \nabla \pR \vert^2\ \text{and} \
L_\lambda^- := \partial_t + 2 \lambda \nabla \pR \cdot \nabla + \lambda \Delta 
\pR.
\end{equation}

\noindent a) The first step of the proof is to show that the estimate
\begin{eqnarray}
& & \int_Q (L_\lambda^+ v) L_\lambda^- v dxdt + \lambda \int_{(0,T) \times \gamma} 
\vert \dN \pR \vert (\dN v)^2 d\sigma dt \nonumber \\
& \geq & C_0 \lambda^3 \rho \int_{Q} \vert \nabla \pR \vert^3 v^2 dxdt - 2 C_1 \lambda \rho \int_{Q} 
\vert \nabla \pR \vert \vert \nabla v \vert^2 dxdt + R_0,
\label{eq:2,1}
\end{eqnarray}
holds uniformly in  $\rho \in [\rho_0,+\infty)$, with 
\begin{equation}
\label{eq:2,001}
R_0 := \lambda^2 \int_{Q} (\partial_t \vert \nabla \pR \vert^2) v^2 dxdt
+ \frac{\lambda}{2} \int_{Q} \left( \partial_t^2 \pR - \Delta^2 \pR \right) v^2 dxdt,
\end{equation}
where $\rho_0$ and the two constants $C_0$ and $C_1$ are the same as in Lemma \ref{lm-wf}. 
This can be done with the aid of \eqref{def-L}, involving
\begin{eqnarray}
(L_\lambda^+ v) L_\lambda^- v \
& = & -(\Delta v) \partial_t v - 2\lambda (\Delta v)
\nabla \pR \cdot \nabla v - \lambda (\Delta v) (\Delta \pR) v
- \lambda (\partial_t \pR) v \partial_t v \nonumber \\
& & - 2 \lambda^2 (\partial_t \pR)
(\nabla \pR \cdot \nabla v) v - \lambda^2 (\partial_t \pR) (\Delta \pR) v^2 - \lambda^2 \vert \nabla 
\pR \vert^2 v \partial_t v \nonumber \\
& & - 2\lambda^3 \vert \nabla \pR \vert^2 (\nabla \pR \cdot \nabla v) v - \lambda^3 \vert \nabla \pR \vert^2 (\Delta \pR) v^2, 
\label{eq:2,01}
\end{eqnarray}
through standard computations, and by integrating separately each term appearing in the right hand side of 
\eqref{eq:2,01}, with respect to $(t,x)$ over $Q$. 

The first term is easily treated with an integration by parts. Taking into account that $v$ vanishes on $\Sigma$ and in 
$\lbrace 0, T \rbrace \times \Omega$, we find that
\begin{equation}
\label{eq:2,2}
- \int_{Q} (\Delta v) \partial_t v \, dxdt = \frac{1}{2} \int_{Q} \partial_t
\vert \nabla v \vert^2 dxdt = 0.
\end{equation}
To examine the second term, we write
$$
\nabla (\nabla \pR \cdot \nabla v) = \mathcal{H}(\pR) \nabla v + 
\mathcal{H}(v) \nabla \pR,
$$
where, in accordance with the notation introduced in Lemma \ref{lm-wf}, $\mathcal{H}(v)$ stands for the Hessian
matrix of $v$, with respect to $x \in \Omega$, and integrate by parts, getting:
\begin{eqnarray}
& & - \int_{Q} (\Delta v) \nabla \pR \cdot \nabla v\ dxdt \nonumber \\
& = &
 \int_{Q} \nabla v \cdot \nabla (\nabla \pR \cdot \nabla v)\ dxdt 
-  \int_{\Sigma} (\dN v) \nabla \pR \cdot \nabla v\ d\sigma dt \nonumber \\
& = & \int_{Q} \mathcal{H}(\pR)\nabla v \cdot \nabla v\ dxdt 
+  \int_{Q} \mathcal{H}(v) \nabla \pR \cdot \nabla v\ dxdt
-  \int_{\Sigma} (\dN \pR) (\dN v)^2\ d\sigma dt. \label{eq:2,3}
\end{eqnarray}
In the last integral, we used the fact, arising from the identity $v_{\vert \Sigma} = 0$, that $\nabla v = (\dN v) \nu$ on $\Sigma$, or equivalently, that the tangential derivative of $v$ vanishes on $\Sigma$.
Moreover, as we have
\begin{eqnarray*}
\int_{Q} \mathcal{H}(v) \nabla \pR \cdot \nabla v\ dxdt & = & \frac{1}{2}
\int_{Q} \nabla \pR \cdot \nabla (\vert \nabla v \vert^2)\ dxdt \\
& = & - \frac{1}{2} \int_{Q} (\Delta \pR) \vert \nabla v \vert^2\ dxdt 
+ \frac{1}{2} \int_{\Sigma} (\dN \pR) \vert \nabla v \vert^2\ d\sigma dt \\
& = & - \frac{1}{2} \int_{Q} (\Delta \pR) \vert \nabla v \vert^2\ dxdt 
+ \frac{1}{2} \int_{\Sigma} (\dN \pR) ( \dN v)^2\ d\sigma dt,
\end{eqnarray*}
then we infer from \eqref{eq:2,3} that
\begin{eqnarray}
- 2 \lambda \int_{Q} (\Delta v) \nabla \pR \cdot \nabla v\ dxdt & = &
2 \lambda \int_{Q}  \mathcal{H}(\pR) \vert \nabla v \vert^2\ dxdt 
- \lambda \int_{Q} (\Delta \pR) \vert \nabla v \vert^2\ dxdt \nonumber \\
& & - \lambda \int_{\Sigma} (\dN \pR) (\dN v)^2\ d\sigma dt.\label{eq:2,4}
\end{eqnarray}
As for the third term entering the right hand side of  \eqref{eq:2,01}, we obtain
\begin{eqnarray}
- \lambda \int_{Q} (\Delta v) (\Delta \pR) v\ dxdt & = & \lambda \int_{Q} \nabla v 
\cdot \nabla \big( (\Delta \pR) v \big)\ dxdt \nonumber \\
& = & \frac{\lambda}{2} \int_{Q} \nabla (\Delta \pR) \cdot \nabla v^2\ dxdt
+ \lambda \int_{Q} (\Delta \pR) \vert \nabla v \vert^2\ dxdt \nonumber \\
& = & - \frac{\lambda}{2} \int_{Q} (\Delta^2 \pR) v^2\ dxdt + \lambda \int_{Q} 
(\Delta \pR) \vert \nabla v \vert^2\ dxdt,\label{eq:2,5}
\end{eqnarray}
by integrating by parts once more.
Next, we find that the fourth term reads
\begin{equation}
\label{eq:2,6}
- \lambda \int_{Q} (\partial_t \pR) v \partial_t v \, dxdt = 
- \frac{\lambda}{2} \int_{Q} (\partial_t \pR) \partial_t v^2 dxdt = 
\frac{\lambda}{2} \int_{Q} (\partial_t^2 \pR) v^2 dxdt,
\end{equation}
while the fifth one can be brought into the form 
\begin{equation}
\label{eq:2,7}
- 2\lambda^2 \int_{Q} (\partial_t \pR) v \nabla \pR \cdot \nabla v \, dxdt = 
- \lambda^2 \int_{Q} (\partial_t \pR) \nabla \pR \cdot \nabla v^2 dxdt.
\end{equation}
Quite similarly, the sixth and seventh terms may be rewritten as, respectively,
\begin{eqnarray}
- \lambda^2 \int_{Q} (\Delta \pR) (\partial_t \pR) v^2\ dxdt 
& = & \lambda^2 \int_{Q} \nabla \pR \cdot \nabla \big( (\partial_t \pR) v^2 \big)\ dxdt \label{eq:2,8} \\
& = & \lambda^2 \int_{Q} (\partial_t \pR) \nabla \pR \cdot \nabla v^2\ dxdt
+ \frac{\lambda^2}{2} \int_{Q} \big(  \partial_t \vert \nabla \pR \vert^2 \big) v^2\ dxdt, \nonumber
\end{eqnarray}
and
\begin{equation}
\label{eq:2,9}
- \lambda^2 \int_{Q} \vert \nabla \pR \vert^2 v \partial_t v\ dxdt = 
- \frac{\lambda^2}{2} \int_{Q} \vert \nabla \pR \vert^2 \partial_t v^2\ dxdt =
\frac{\lambda^2}{2} \int_{Q} \big( \partial_t \vert \nabla \pR \vert^2 \big)
v^2\ dxdt.
\end{equation}
Finally, the two last terms in the right hand side of \eqref{eq:2,01}, are re-expressed as
\begin{equation}
\label{eq:2,10}
- 2\lambda^3 \int_{Q} \vert \nabla \pR \vert^2 ( \nabla \pR \cdot 
\nabla v) v\ dxdt = - \lambda^3 \int_{Q} \vert \nabla \pR \vert^2 \nabla \pR 
\cdot \nabla v^2\ dxdt,
\end{equation}
and
\begin{eqnarray}
- \lambda^3 \int_{Q} (\Delta \pR) \vert \nabla \pR \vert^2 v^2\ dxdt & = & 
\lambda^3 \int_{Q} \nabla \pR \cdot \nabla \big( \vert \nabla \pR \vert^2 
v^2 \big)\ dxdt \label{eq:2,11} \\
& = & \lambda^3 \int_{Q}  \left( \nabla  \vert \nabla \pR \vert^2 \cdot
\nabla \pR \right) v^2\ dxdt
+ \lambda^3 \int_{Q} \vert \nabla \pR \vert^2 \nabla \pR \cdot \nabla v^2\ dxdt.
\nonumber
\end{eqnarray}
Thus, putting \eqref{eq:2,01}--\eqref{eq:2,11} together, we obtain that
\begin{eqnarray*}
&  & \int_{Q} (L_\lambda^+ v) L_\lambda^- v \ dxdt  + \lambda \int_{\Sigma} (\dN \pR) (\dN v)^2\ d\sigma dt \\
& = & \lambda^3 \int_{Q}  ( \nabla \vert \nabla \pR \vert^2  \cdot \nabla \pR \big) v^2\ dxdt + 
2 \lambda \int_{Q} \mathcal{H}(\pR) \vert \nabla v \vert^2\ dxdt  \\ 
& & + \frac{\lambda}{2} \int_{Q} \big( \partial_t^2 \pR - \Delta^2 \pR \big) v^2\ dxdt + \lambda^2 \int_{Q} \big( \partial_t  \vert \nabla \pR \vert^2 \big) v^2\ dxdt.
\end{eqnarray*}
With reference to Points (b) and (c) in Lemma \ref{lm-wf},  this entails for all $\rho \in [\rho_0,+\infty)$, that
\begin{eqnarray*}
& & \int_{Q} (L_\lambda^+ v) L_\lambda^- v\ dxdt + \lambda \int_{\Sigma} (\dN \pR) (\dN v)^2\ d\sigma dt \\
& \geq &  C_0 \lambda^3 \rho \int_{Q} \vert \nabla \pR \vert^3 v^2\ dxdt - 2 C_1 \lambda \rho \int_{Q} \vert 
\nabla \pR \vert^2 \vert \nabla v \vert^2\ dxdt + R_0, 
\end{eqnarray*}
where $R_0$ is given by \eqref{eq:2,001}. Finally, \eqref{eq:2,1} follows immediately from (C.iii) and the above estimate.

\noindent b) Further, remembering \eqref{def-L}, we see that
$\Delta v = -L_\lambda^+ v  - \lambda^2 \vert \nabla \pR \vert^2 v - \lambda (\partial_t \pR) v$, whence
\begin{equation}
\label{eq:2,14}
(\Delta v)^2  \leq 3 \left( (L_\lambda^+ v)^2 +  \lambda^4 \vert \nabla \pR \vert^4 v^2 
+  \lambda^2 (\partial_t \pR)^2 v^2 \right).
\end{equation}
Next, with reference to Point (a) in Lemma \ref{lm-wf}, we derive from \eqref{eq:2,14} that
\begin{equation}
\label{eq:2,141}
\big( \lambda \vert \nabla \pR \vert \big)^{-1}(\Delta v)^2 \leq 
\frac{3}{\alpha \lambda} (L_\lambda^+ v)^2 + 3 \lambda^3 \vert \nabla \pR \vert^3 v^2 
+ 3 \lambda \vert \nabla \pR \vert^{-1} (\partial_t \pR)^2 v^2,
\end{equation}
for all $\rho \in [\rho_0,+\infty)$. This entails that
\begin{equation}
\label{eq:2,15}
\int_Q \big( \lambda \vert \nabla \pR  \vert \big)^{-1} (\Delta v)^2\ dxdt \leq 
\frac{3}{\alpha \lambda} \int_Q (L_\lambda^+ v)^2\ dxdt + 
3 \int_Q \big( \lambda \vert \nabla \pR \vert \big)^3 v^2\ dxdt + R_1,
\end{equation}
with
\begin{equation}
\label{eq:2,150}
R_1 := 3 \lambda \int_Q \vert \nabla \pR \vert^{-1} (\partial_t \pR)^2 v^2\ dxdt.
\end{equation}
Further, taking into account that $v_{\vert \Sigma} = 0$, we get 
$$
 \int_Q \vert \nabla \pR \vert \vert \nabla v \vert^2\ dxdt = 
- \int_Q \vert \nabla \pR \vert v \Delta v\ dxdt + 
\frac{1}{2}  \int_Q  \big( \Delta \vert \nabla \pR \vert \big) v^2\ dxdt,
$$
upon integrating by parts. This and the estimate,
\begin{eqnarray*}
\rho^{3 \slash 2} \lambda \vert \nabla \pR \vert \vert v \Delta v \vert & = &
\left( \big( \lambda \vert \nabla \pR \vert \big)^{-1 \slash 2} \vert \Delta v \vert
\right) \left( \rho^{3 \slash 2}  \big( \lambda \vert \nabla \pR \vert \big)^{\frac{3}{2}} \vert v \vert
\right)  \\
& \leq & \frac{1}{2} \big( \lambda \vert \nabla \pR \vert \big)^{-1} (\Delta v)^2 +
\frac{\rho^3}{2} \big( \lambda \vert \nabla \pR \vert \big)^3 v^2,
\end{eqnarray*}
yield
\begin{eqnarray*}
\rho^{3 \slash 2} \lambda \int_Q \vert \nabla \pR \vert \vert \nabla v \vert^2\ dxdt & \leq &
\frac{1}{2} \int_Q \big(  \lambda \vert \nabla \pR \vert \big)^{-1} (\Delta v)^2\ dxdt +
\frac{\rho^3}{2} \int_Q \big( \lambda \vert \nabla \pR \vert \big)^3 v^2\ dxdt \\
& & + \frac{\rho^{3 \slash 2}}{2} \lambda \int_Q \big( \Delta  \vert \nabla \pR \vert \big) v^2\ dxdt.
\end{eqnarray*}
From this and \eqref{eq:2,15}-\eqref{eq:2,150}, it then follows that
\begin{eqnarray*}
& & \rho^{3 \slash 2} \lambda \int_Q \vert \nabla \pR \vert \vert \nabla v \vert^2\ dxdt +
\frac{1}{2} \int_Q \big( \lambda \vert \nabla \pR \vert \big)^{-1} (\Delta v)^2\ dxdt \\
& \leq & \frac{3}{\alpha \lambda} \int_Q (L_\lambda^+ v)^2\ dxdt + 
\left( 3 + \frac{\rho^3}{2} \right) \int_Q \big( \lambda \vert \nabla \pR \vert \big)^3 v^2\ dxdt  + R_2,
\end{eqnarray*}
with 
\begin{equation}
\label{eq:2,19}
R_2 := R_1 + \frac{\rho^{3 \slash 2}}{2} \lambda 
\int_Q \big( \Delta  \vert \nabla \pR \vert \big) v^2\ dxdt.
\end{equation}
Therefore, by substituting $\max(\rho_0,6^{1 \slash 3})$  for $\rho_0$, we obtain for all $\rho \in [\rho_0,+\infty)$, that
\begin{eqnarray*}
& & \rho^{1 \slash 2} \lambda \int_Q \vert \nabla \pR \vert  \vert \nabla v \vert^2\ dxdt +
\frac{1}{2} \int_Q \big( \lambda \vert \nabla \pR \vert \big)^{-1} (\Delta v)^2\ dxdt \\
& \leq & \frac{3}{\alpha \lambda} \int_Q (L_\lambda^+ v)^2\ dxdt + 
\rho^3 \int_Q \big( \lambda \vert \nabla \pR \vert \big)^3 v^2\ dxdt + R_2.
\end{eqnarray*}  
Putting this together with \eqref{eq:2,1}, we find 
\begin{eqnarray}
& & \frac{3}{\alpha \lambda} \int_Q (L_\lambda^+ v)^2\ dxdt + 
\frac{2}{C_0} \int_Q (L_\lambda^+ v) L_\lambda^- v\ dxdt  + \frac{2\lambda}{C_0} 
\int_{\gamma \times (0,T)} \vert \dN \pR \vert (\dN v)^2\ d\sigma dt \nonumber \\
& \geq & \left( \rho^{1 \slash 2} - \frac{4C_1}{C_0} \right) \rho \lambda 
\int_Q \vert \nabla \pR \vert \vert \nabla v \vert^2\ dxdt +
\rho \int_Q \big( \lambda \vert \nabla \pR \vert \big)^3 v^2\ dxdt \nonumber \\
& & + \frac{1}{2} \int_Q \big( \lambda \vert \nabla \pR \vert  \big)^{-1} (\Delta v)^2 \ dxdt + R_3,
\label{eq:2,21}
\end{eqnarray} 
with
\begin{equation}
\label{eq:2,22}
R_3 := \frac{2R_0}{C_0} - R_2.
\end{equation}
Then, upon possibly enlarging $\rho_0$, in such a way that $\rho_0^{1 \slash 2} - 4 (C_1  \slash C_0)$ is lower bounded by a positive constant $C_2$, we infer from \eqref{eq:2,21}, that for all $\rho \in [\rho_0,+\infty)$,\begin{eqnarray}
& & \frac{3}{\alpha \lambda} \int_Q (L_\lambda^+ v)^2\ dxdt + 
\frac{2}{C_0} \int_Q (L_\lambda^+ v) L_\lambda^- v\ dxdt  + \frac{2\lambda}{C_0} 
\int_{\gamma \times (0,T)} \vert \dN \pR \vert (\dN v)^2\ d\sigma dt \nonumber \\
& \geq & C_2 \lambda \int_Q \vert \nabla \pR \vert \vert \nabla v \vert^2\ dxdt +
\rho \int_Q \big( \lambda \vert \nabla \pR \vert \big)^3 v^2\ dxdt \nonumber \\
& & + \frac{1}{2} \int_Q \big( \lambda \vert \nabla \pR \vert \big)^{-1} (\Delta v)^2\ dxdt + R_3.
\label{eq:2,23}
\end{eqnarray} 

\noindent c) Further, since
$\partial_t v = L_\lambda^- v - 2 \lambda \nabla \pR \cdot \nabla v - \lambda (\Delta \pR) v$, 
by \eqref{def-L}, we find upon arguing as in the derivation \eqref{eq:2,141}, that
\begin{equation*}
\big( \lambda \vert \nabla \pR \vert \big)^{-1}(\partial_t v)^2 \leq 
\frac{3}{\alpha \lambda} (L_\lambda^- v)^2 + 12 \lambda \vert \nabla \pR \vert
\vert \nabla v \vert^2 + \frac{3\lambda}{\alpha} (\Delta \pR)^2 v^2,
\end{equation*}
which immediately entails
\begin{equation}
\label{eq:2,24}
\int_Q \big( \lambda \vert \nabla \pR \vert \big)^{-1}(\partial_t v)^2 dxdt \leq 
\frac{3}{\alpha \lambda} \int_Q (L_\lambda^- v)^2 dxdt 
+ \frac{12}{\alpha} \lambda \int_Q \vert \nabla \pR \vert \vert \nabla v \vert^2 dxdt + R_4,
\end{equation}
with
\begin{equation}
\label{eq:2,25}
R_4 := \frac{3\lambda}{\alpha} \int_Q (\Delta \pR)^2 v^2 dxdt.
\end{equation}
Putting \eqref{eq:2,23} together with \eqref{eq:2,24}, we see that
\begin{eqnarray*}
& & \frac{3}{\alpha \lambda} \int_Q (L_\lambda^+ v)^2 dxdt + 
\frac{2}{C_0} \int_Q (L_\lambda^+ v) L_\lambda^- v\ dxdt 
+ \frac{C_2}{8 \alpha \lambda} \int_Q (L_\lambda^- v)^2\ dxdt \\
& & + \frac{2\lambda}{C_0} \int_{\gamma \times (0,T)} \vert \dN \pR \vert 
(\dN v)^2\ d\sigma dt  \\
& \geq & \frac{C_2}{24} \int_Q ( \lambda \vert \nabla \pR \vert)^{-1} 
(\partial_t v)^2\  dxdt + 
\frac{1}{2} \int_Q ( \lambda \vert \nabla \pR \vert )^{-1} (\Delta v)^2\ dxdt 
+ \frac{C_2 \lambda}{2} \int_Q \vert \nabla \pR \vert \vert \nabla v \vert^2\  dxdt \\
&& + \rho  \int_Q ( \lambda \vert \nabla \pR \vert )^3 v^2\ dxdt + R_5,
\end{eqnarray*}
where we have set
\begin{equation}
\label{eq:2,27}
R_5 := R_3 - \frac{C_2}{24}R_4.
\end{equation}
Therefore, bearing in mind that $L_\lambda=L_\lambda^++L_\lambda^-$, we get that
\begin{eqnarray}
& & \frac{1}{C_0} \left( \int_Q (L_\lambda v)^2\ dxdt +  2 \lambda \int_{\gamma \times (0,T)} \vert \dN \pR \vert (\dN v)^2 d\sigma dt  \right) \nonumber \\
& \geq & \frac{C_2}{24} \int_Q ( \lambda \vert \nabla \pR \vert )^{-1} (\partial_t v)^2\ dxdt + 
\frac{1}{2} \int_Q ( \lambda \vert \nabla \pR \vert )^{-1} (\Delta v)^2\ dxdt  \nonumber\\
& & + \frac{C_2 \lambda}{2} \int_Q \vert \nabla \pR \vert \vert \nabla v \vert^2\ dxdt +
\rho  \int_Q  ( \lambda \vert \nabla \pR \vert )^3 v^2\ dxdt + R_5, \label{eq:2,28}
\end{eqnarray}
provided $\lambda \in [C_0 (3+C_2) \alpha^{-1},+\infty)$.

d) Without restricting the generality of the reasoning, we may assume in the sequel (upon possibly substituting $C_0 (3+C_2) \alpha^{-1}$ for $\lambda_0(\rho)$) that $\lambda_0(\rho) \in [C_0 (3+C_2) \alpha^{-1},+\infty)$. This way, putting
Points (d) and (e) in Lemma \ref{lm-wf}, together with \eqref{eq:2,001}, \eqref{eq:2,150}, \eqref{eq:2,19},
\eqref{eq:2,22}, and \eqref{eq:2,25} -\eqref{eq:2,27}, we find for any $\rho \in [\rho_0,+\infty)$ and $\lambda \in [\lambda_0(\rho),+\infty)$, that $\vert R_5 \vert$ is majorized by $\rho \lambda^2 \int_Q \vert \nabla \pR \vert^3 v^2\ dxdt$, up to some positive multiplicative constant, which is independent of $\rho$ and $\lambda$.
Therefore, upon possibly enlarging $\rho_0$, we obtain that
$$
\vert R_5 \vert \leq \frac{\rho}{2} \int_Q (  \lambda \vert \nabla \pR \vert )^3 v^2\ dxdt,\ \rho \in [\rho_0,+\infty),\ \lambda\in [\lambda_0(\rho),+\infty).
$$
This and \eqref{eq:2,28} yield
\begin{eqnarray}
& & \frac{1}{C_0} \left( \int_Q (L_\lambda v)^2\ dxdt +  2\lambda \int_{\gamma \times (0,T)} \vert \dN \pR \vert 
(\dN v)^2 d\sigma dt \right)  \nonumber \\
& \ge & \frac{C_2}{24} \int_Q ( \lambda \vert \nabla \pR \vert )^{-1} (\partial_t v)^2\ dxdt + 
\frac{1}{2} \int_Q ( \lambda \vert \nabla \pR \vert )^{-1} (\Delta v)^2\ dxdt  \nonumber\\
& & + \frac{C_2\lambda}{2} \int_Q \vert \nabla \pR \vert \vert \nabla v \vert^2\ dxdt +
 \frac{\rho}{2} \int_Q ( \lambda \vert \nabla \pR \vert )^3 v^2\ dxdt, \label{eq:2,29}
\end{eqnarray}
provided $\rho \in [\rho_0,+\infty)$ and $\lambda \in [\lambda_0(\rho),+\infty)$.
Next, since $v = e^{\lambda \pR} u$, then it holds true that
$e^{2\lambda \pR} \vert \nabla u \vert^2 \leq 2 \left( \vert \nabla v \vert^2 + 
\lambda^2 \vert \nabla \pR \vert^2 v^2 \right)$, and it ensues from \eqref{eq:2,29} and Point (a) in Lemma \ref{lm-wf}, that
\begin{eqnarray*}
& & \frac{1}{C_0} \left( \int_Q (L_\lambda v)^2\ dxdt + 2\lambda \int_{\gamma \times (0,T)} \vert \dN \pR \vert 
(\dN v)^2 d\sigma dt \right) \nonumber \\
& \ge & \frac{C_2}{24} \int_Q ( \lambda \vert \nabla \pR \vert )^{-1}  (\partial_t v)^2\ dxdt + 
\frac{1}{2} \int_Q ( \lambda \vert \nabla \pR \vert )^{-1} (\Delta v)^2\ dxdt  \\
& & + \frac{C_2\lambda}{4} \int_Q e^{2\lambda \pR}  \vert \nabla \pR \vert  
\vert \nabla u \vert^2\ dxdt +
\frac{\lambda^3}{2} (\rho - C_2 \alpha^{-1} ) \int_Q \vert \nabla \pR \vert^3 v^2\ dxdt. \nonumber
\end{eqnarray*}
Thus, upon possibly substituting $C_2 \alpha^{-1}+ 1$ for $\rho_0$, we immediately get for all $\rho \in [\rho_0,+\infty)$ and $\lambda \in [\lambda_0(\rho),+\infty)$, that
\begin{eqnarray}
& & \frac{1}{C_0} \left( \int_Q (L_\lambda v)^2\ dxdt + 2\lambda \int_{\gamma \times (0,T)} \vert \dN \pR \vert 
(\dN v)^2 d\sigma dt \right) \nonumber \\
& \ge & \frac{C_2}{24} \int_Q ( \lambda \vert \nabla \pR \vert )^{-1}  (\partial_t v)^2\ dxdt + 
\frac{1}{2} \int_Q ( \lambda \vert \nabla \pR \vert )^{-1} (\Delta v)^2\ dxdt  \nonumber \\
& & + \frac{C_2\lambda}{4} \int_Q e^{2\lambda \pR}  \vert \nabla \pR \vert  
\vert \nabla u \vert^2\ dxdt +
\frac{\lambda^3}{2} \int_Q \vert \nabla \pR \vert^3 v^2\ dxdt. \label{eq:2,31}
\end{eqnarray}
Further, due to \eqref{eq:1} and (C.ii), we have
$\vert \dN \pR \vert \leq C_4 g$ and $\vert \nabla \pR \vert^i \geq C_5 g^i$ for  $i \in \{  -1, 1, 3 \}$, in $Q$, where the two constants $C_4$ and $C_5$ depend only on $\rho$, $\psi$ and $\alpha_0$. Therefore, for all $\rho \in [\rho_0,+\infty)$ and $\lambda \in [\lambda_0(\rho),+\infty)$, we deduce from \eqref{eq:2,31} the existence of a positive constant
$C_6$, depending only on $\rho$, $T$, $\psi$ and $\alpha_0$, such that
\begin{eqnarray*}
& & \int_Q (L_\lambda v)^2\ dxdt + \int_{\gamma \times (0,T)} (\lambda g) (\dN v)^2 d\sigma dt \nonumber \\
& \geq &  C_6  \left( \int_Q (\lambda g)^{-1} (\partial_t v)^2\ dxdt + 
\int_Q (\lambda g)^{-1} (\Delta v)^2\ dxdt \right. 
\\
& &
\left. + \int_Q e^{2\lambda \pR} (\lambda g) \vert \nabla u \vert^2\ dxdt +
\int_Q (\lambda g)^3 v^2\ dxdt \right). \nonumber 
\end{eqnarray*}

Finally, \eqref{eq:CI} follows immediately from this upon remebering that
$v = e^{\lambda \pR} u$, $v_{\vert \Sigma} = 0$ and
$(L_\lambda v)^2 = e^{2 \lambda \pR} (P u)^2$.


\subsection{Proof of Lemma \ref{lm-wf}}
\label{sec-wf}
As in Section \ref{sec-prCI}, we systematically omit the variables $t \in (0,T)$ and $x \in \Omega$ in front of the functions $g$, $\psi$ or $\pR$, and their derivatives, in this proof.

\noindent a) We have 
\begin{equation}
\label{ab:1}
\nabla \pR = \rho g e^{\rho \psi}  \nabla \psi,
\end{equation}
from the very definition \eqref{eq:1} of $\pR$. Thus, using that $g(t) \geq 4T^{-2}$ for all $t \in (0,T)$, we deduce the estimate (a) directly from \eqref{ab:1} and (C.ii). 

\noindent b) With reference to  \eqref{eq:1} and \eqref{ab:1}, we see that
$$
\nabla \vert \nabla \pR \vert  = \rho g \nabla  \left( e^{\rho \psi} \vert \nabla \psi \vert \right) 
= 
\rho g \rho e^{\rho \psi} \left( \vert \nabla \psi \vert \nabla \psi + \nabla \vert \nabla \psi \vert \right) 
= \rho \vert \nabla \psi \vert   \nabla \pR  + \vert \nabla \pR \vert \frac{\nabla  \vert \nabla \psi \vert}{\vert \nabla \psi \vert},
$$
and hence
\begin{eqnarray}
\nabla  \vert \nabla \pR \vert^2 \cdot \nabla \pR 
& = & 2 \vert \nabla \pR \vert \nabla \vert \nabla \pR \vert \cdot \nabla \pR \nonumber \\
& = &  2 \left( \rho \vert \nabla \pR \vert^3 \vert \nabla \psi \vert + \vert \nabla \pR \vert^2  
\frac{\nabla  \vert \nabla \psi \vert \cdot \nabla \pR}{\vert \nabla \psi \vert} \right).
\label{ab:4}
\end{eqnarray}
Next, we have $\frac{\big\vert \nabla  \vert \nabla \psi \vert \cdot \nabla \pR \big\vert}{\vert \nabla \psi \vert} \leq C \alpha_0^{-1} \vert \nabla \pR \vert$, by (C.ii), where $C=C(\psi)$ is a positive constant depending only on $\psi$, in virtue of (C.ii). Therefore, \eqref{ab:4} yields
$$
\nabla  \vert \nabla \pR \vert^2  \cdot \nabla \pR 
\geq 2 \rho \alpha_0 \left( 1 - \frac{C}{\rho \alpha_0^2} \right) \vert \nabla \pR \vert^3,
$$
and the estimate (b) follows readily from this upon taking $\rho_0 \in [2 \alpha_0^2,+\infty)$.

\noindent c) For all natural numbers $i$ and $j$ in $\{ 1,2,\ldots, n\}$, we get that
$$
\partial_{x_i} \partial_{x_j} \pR  = \rho g  e^{\rho \psi} \left( \partial_{x_i} \partial_{x_j} \psi
+ \rho (\partial_{x_i} \psi) (\partial_{x_j} \psi) \right)  =  \left( \partial_{x_i} \partial_{x_j} \psi
+ \rho (\partial_{x_i} \psi) (\partial_{x_j} \psi) \right) \frac{\vert \nabla \pR \vert}{\vert \nabla \psi \vert},
$$
directly from \eqref{eq:1} and \eqref{ab:1}. Thus, using (C.ii), each
$\vert \partial_{x_i} \partial_{x_j} \pR \vert$, $i,j=1,\ldots,n$, is upper bounded by  $\rho \vert \nabla \pR \vert$, up to some positive multiplicative constant depending only on $\psi$ and $\alpha_0$. As a consequence, there exists $C=C(\psi,\alpha_0) \in (0,+\infty)$, such that
$$ \big\vert \mathcal{H}(\pR) \xi \cdot \xi \big\vert \leq C \rho \vert \nabla \pR \vert \vert \xi \vert^2,\ \xi \in \br^{n - 1} \times \br. $$
This immediately leads to (c).

\noindent d) In light of \eqref{ab:1}, we have
\begin{equation}
\label{wf2}
\rho g e^{\rho \psi} \abs{\nabla \psi} =\abs{\nabla \pR},
\end{equation}
so it follows from (C.i) and (C.ii), that
\begin{equation}
\label{wf2b}
g \leq \frac{\abs{\nabla \pR}}{\alpha_0},\ \rho \in [1,+\infty),
\end{equation}
and from the estimate $g \geq 4 \slash T^2$, that
\begin{equation}
\label{wf2c}
\rho e^{\rho \psi} \leq \frac{T^2}{4} \abs{\nabla \pR}.
\end{equation}
Therefore, as $\partial_t \abs{\nabla \pR}^2 = 2(2t-T) \rho^2 g^3 e^{2 \rho \psi} \abs{\nabla \psi}^2$, we infer from
\eqref{wf2}-\eqref{wf2b} that 
\begin{equation}
\label{wf3}
\abs{\partial_t \abs{\nabla \pR}^2} \leq C(T,\alpha_0) \abs{\nabla \pR}^3.
\end{equation}
Next, we find
\begin{equation}
\label{wf4}
\Delta \pR = \rho g e^{\rho \psi} \left( \Delta \psi + \rho \abs{\nabla \psi}^2 \right),
\end{equation}
by direct calculation, whence
\begin{equation}
\label{wf5}
\rho^{-1} (\Delta \pR)^2 \leq C(\psi,\alpha_0) \rho \abs{\nabla \pR}^2 \leq C(T,\psi,\alpha_0) \abs{\nabla \pR}^3,
\end{equation}
from \eqref{wf2} and \eqref{wf2c}. Quite similarly, we have
$$
\Delta \abs{\nabla \pR} = \rho g e^{\rho \psi} \left( \rho^2 \abs{\nabla \psi}^3 + \rho (\Delta \psi) \abs{\nabla \psi} + 2 \rho \nabla \psi \cdot \nabla \abs{\nabla \psi} + \Delta \abs{\nabla \psi} \right),
$$
by \eqref{wf2}, so we get
\begin{equation}
\label{wf6}
\abs{\Delta \abs{\nabla \pR} } \leq C(\psi) \rho^3 ge^{\rho \psi} \leq C(\psi,T) \abs{\nabla \pR}^3,
\end{equation}
with the help of \eqref{wf2} and \eqref{wf2c}. Last, with reference to \eqref{wf4}, we see that
$$
\Delta^2 \pR = \rho g e^{\rho \psi} \left( \rho  \nabla \psi + \nabla \right) \cdot \left( \rho \left( \Delta \psi + 
\rho \abs{\nabla \psi}^2 \right) \nabla \psi + \rho \nabla \abs{\nabla \psi}^2 + \nabla \Delta \psi \right),
$$
which, combined with (C.i), yields
$$
\abs{\Delta^2 \pR} \leq C(\psi,\alpha_0) \rho^4 g e^{\rho \psi} \leq C(\psi,T,\alpha_0) (\rho e^{-\rho \psi}) \rho^3 g e^{2 \rho \psi}  \leq C(\psi,T,\alpha_0)  \rho^3 g e^{2 \rho \psi}. $$
Thus, we have
$$ \abs{\Delta^2 \pR} \leq C(\psi,T,\alpha_0) \abs{\nabla \pR}^3, $$
and inequality (d) follows immediately from this, \eqref{wf3} and \eqref{wf5}-\eqref{wf6}.

\noindent e) Using \eqref{eq:1}, we get through direct computations, that
$$ \partial_t \pR=(2t-T)g \pR\ \mbox{and}\ \partial_t^2 \pR(t,x)=2 \left( 1 + (2t-T)^2 g \right) g \pR. $$
From this and the basic estimate $\abs{\pR} \leq g e^{2 \rho \norm{\psi}_{L^\infty(\Omega)}}$, we derive that
\begin{equation}
\label{wf1}
\abs{\partial_t \pR} \leq T g^2 e^{2 \rho \norm{\psi}_{L^\infty(\Omega)}}\ \mbox{and}\
\abs{\partial_t^2 \pR} \leq \frac{2(4+T^4)}{T^2} g^3 e^{2 \rho \norm{\psi}_{L^\infty(\Omega)}}.
\end{equation}
Further, as $\nabla \pR = \rho g e^{\rho \psi} \nabla \psi$, we have $\abs{\nabla \pR} \geq \rho g \alpha_0$, by (C.ii),
then it follows readily from \eqref{wf1}, that
$$
\abs{\partial_t^2 \pR} + \abs{\nabla \pR}^{-1} (\partial_t \pR)^2  \leq  C(T,\alpha_0) g^3 e^{4 \rho \norm{\psi}_{L^\infty(\Omega)}} \leq C(T,\alpha_0) \abs{\nabla \pR}^3 e^{4 \rho \norm{\psi}_{L^\infty(\Omega)}},\ \rho \in [1,+\infty), $$ 
giving the estimate (e).


\section{Proof of Theorem \ref{t2}}
\label{sec-prmain}

The proof of Theorem \ref{t2} is based on an observability inequality, that is derived in the coming section with the aid of the Carleman estimate \eqref{eq:CI}.


\subsection{Observability inequality}
\label{sec-obs}
The statement we are aiming for, is as follows.

\begin{prop}
\label{p1}
Let $\gamma'$ and $\gamma$ be the same as in Theorem \ref{t2}. For $v_0 \in H_0^1(\Omega)$,  let $v$ be the  $H^{2,1}(Q) \cap \mathcal{C}([0,T],H_0^1(\Omega))$-solution given by Theorem \ref{t1}, to the IBVP \eqref{eq:bvp1} associated with $f=0$. 
Then, there exists a constant $C > 0$, depending only on $\alpha_0$, $\omega$, $\gamma'$, and $T$, such that we have
\begin{equation}
\label{p1a}
\norm{v(T,\cdot)}_{H^1(\Omega)} \leq C \norm{\dN v}_{L^2((0,T) \times \gamma)}.
\end{equation}
\end{prop}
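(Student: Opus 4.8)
The goal is to bound $\|v(T,\cdot)\|_{H^1(\Omega)}$ by the partial Neumann data $\|\partial_\nu v\|_{L^2((0,T)\times\gamma)}$, for a solution of the homogeneous heat equation ($f=0$). The natural strategy is to apply the Carleman estimate of Theorem~\ref{t:CI} to $v$, which requires $v\in H^{2,1}(Q)\cap\mathcal C([0,T];H^1_0(\Omega))$ — exactly the regularity provided by Theorem~\ref{t1}. Since $Pv=0$, the Carleman estimate~\eqref{eq:CI} immediately gives, for fixed $\rho$ and $\lambda$ large enough,
\begin{equation*}
\bigl\Vert e^{\lambda\pR}(\lambda g)^{1/2}\vert\nabla v\vert\bigr\Vert_{L^2(Q)}+\bigl\Vert e^{\lambda\pR}(\lambda g)^{3/2}v\bigr\Vert_{L^2(Q)}\le C\bigl\Vert e^{\lambda\pR}(\lambda g)^{1/2}\dN v\bigr\Vert_{L^2((0,T)\times\gamma)}.
\end{equation*}
The issue is that both sides are weighted, and the weight $e^{\lambda\pR}$ degenerates (tends to $0$) as $t\to 0^+$ and $t\to T^-$, because $g(t)=1/(t(T-t))\to+\infty$ there and $\pR<0$. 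So one cannot read off a bound at $t=T$ directly; an energy argument is needed to transfer information from an interior time slice to $t=T$.

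\textbf{Key steps.} First, fix $\rho=\rho_0$ and $\lambda=\lambda_0$, so all the weights become fixed functions of $(t,x)$. Restrict the time integration in the left-hand side of the Carleman estimate to a subinterval $[T/4,3T/4]$ (or any compact subinterval of $(0,T)$); on that subinterval $g$ is bounded above and below and $\pR$ is bounded below by a negative constant, so $e^{2\lambda\pR}$ is bounded below by a positive constant there. This yields
\begin{equation*}
\int_{T/4}^{3T/4}\!\!\int_\Omega\bigl(\vert\nabla v\vert^2+v^2\bigr)\,dx\,dt\le C\int_0^T\!\!\int_\gamma(\dN v)^2\,d\sigma\,dt,
\end{equation*}
after bounding the right-hand weight $e^{\lambda\pR}(\lambda g)^{1/2}$ from above by a constant on $(0,T)$ (it is bounded since $e^{\lambda\pR}g^{1/2}\to 0$ at the endpoints and is continuous in between). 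Hence $\|v(t_*,\cdot)\|_{H^1(\Omega)}^2\le C\|\dN v\|_{L^2((0,T)\times\gamma)}^2$ for \emph{some} $t_*\in[T/4,3T/4]$, by the mean value theorem applied to the integrable function $t\mapsto\|v(t,\cdot)\|_{H^1(\Omega)}^2$.

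Second, propagate this estimate forward from $t_*$ to $t=T$ using the dissipativity of the heat semigroup. Since $v$ solves $v'+Av=0$ on $(t_*,T)$ with $v(t_*)\in H^1_0(\Omega)=D(A^{1/2})$, we have $v(t)=e^{-(t-t_*)A}v(t_*)$, and the argument already used in the proof of Theorem~\ref{t1} (commuting $A^{1/2}$ with the semigroup and using $\|e^{-sA}\|_{\mathcal B(L^2(\Omega))}\le 1$) gives $\|v(t)\|_{H^1(\Omega)}\le\|v(t_*)\|_{H^1(\Omega)}$ for all $t\ge t_*$, in particular at $t=T$. Combining the two steps yields~\eqref{p1a}, with $C$ depending only on $\alpha_0$, $\omega$, $\gamma'$, and $T$ (the dependence on $\rho_0$, $\lambda_0$ being absorbed since these are themselves determined by those data).

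\textbf{Main obstacle.} The delicate point is the handling of the degenerate weights: one must be careful that restricting to $[T/4,3T/4]$ genuinely produces a \emph{uniform} positive lower bound on $e^{2\lambda\pR}(\lambda g)$ there and a \emph{uniform} upper bound on $e^{2\lambda\pR}(\lambda g)$ on all of $(0,T)$ for the boundary term — both follow from $\pR(t,x)=g(t)(e^{\rho\psi}-e^{2\rho\|\psi\|_\infty})$ having a strictly negative factor and $g(t)\to\infty$ at the endpoints, but this needs to be stated cleanly. A secondary (minor) point is justifying the mean value / pigeonhole selection of $t_*$, which is immediate once one knows $t\mapsto\|v(t,\cdot)\|_{H^1(\Omega)}^2$ is in $L^1(T/4,3T/4)$; this is guaranteed by $v\in\mathcal C([0,T];H^1_0(\Omega))$. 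Everything else is the semigroup estimate already carried out for Theorem~\ref{t1}.
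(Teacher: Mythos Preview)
Your argument is correct, and the Carleman step coincides with the paper's proof (restrict the left-hand side to $[T/4,3T/4]$ where $e^{2\lambda\pR}$ and $g$ have uniform two-sided bounds, and bound the boundary weight $e^{\lambda\pR}(\lambda g)^{1/2}$ uniformly on $(0,T)$ using that $\pR\le -c\,g$ there). Where you diverge is in the propagation from the interior interval to $t=T$. The paper introduces a cutoff $\eta\in\mathcal C^\infty([0,T];[0,1])$ with $\eta=0$ on $[0,T/4]$ and $\eta=1$ on $[3T/4,T]$, sets $w=\eta v$, and applies the energy estimate~\eqref{es1} of Theorem~\ref{t1} to the inhomogeneous problem $\partial_t w-\Delta w=\eta' v$, $w(0)=0$; since $\eta'$ is supported in $[T/4,3T/4]$, this gives $\|v(T,\cdot)\|_{H^1}=\|w(T,\cdot)\|_{H^1}\le C\|v\|_{L^2(T/4,3T/4;H^1(\Omega))}$ directly. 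Your route instead selects a single time $t_*$ by the mean value theorem and then invokes the $H^1$-contractivity of the semigroup, $\|v(T,\cdot)\|_{H^1}\le\|v(t_*,\cdot)\|_{H^1}$. Both are valid; your version is slightly more elementary and exploits the homogeneity $Pv=0$ explicitly (indeed, since $t\mapsto\|v(t,\cdot)\|_{H^1}$ is nonincreasing for the homogeneous equation, one could even bypass the mean value step and integrate $\|v(T,\cdot)\|_{H^1}^2\le\|v(t,\cdot)\|_{H^1}^2$ over $t\in[T/4,3T/4]$), while the paper's cutoff argument is more robust in that it would survive a nonzero source term.
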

\begin{proof}
Let us pick a function $\eta \in \mathcal{C}^\infty([0,T];[0,1])$, obeying
$\eta(t) =0$ if $t \in \left[ 0,\frac{T}{4} \right]$, and $\eta(t)= 1$ if $t \in \left[ \frac{3T}{4},T \right]$, 
in such a way that
$w := \eta v$ is solution to the following system:
\begin{equation}
\label{o1}
\begin{cases} 
\partial_t w - \Delta w = \eta' v & \text{in} \quad Q, \\ 
w(0,\cdot) = 0 & \text{in} \quad \Omega,\\
w = 0 & \text{on} \quad \Sigma.
\end{cases}
\end{equation}
Then, we have 
$\Vert w(T,\cdot) \Vert_{H^1(\Omega)} \leq \norm{\eta' v}_{L^2(0,T;H^1(\Omega))}$, from  \eqref{es1},  and consequently
\begin{equation}
\label{eq:3,1}
\Vert v(T,\cdot) \Vert_{H^1(\Omega)}  = \Vert w(T,\cdot) \Vert_{H^1(\Omega)}\leq 
\Vert \eta \Vert_{W^{1,\infty}(0,T)} \Vert v \Vert_{H^1 \left( \left( \frac{T}{4},\frac{3T}{4} \right) \times \Omega \right)},
\end{equation}
since $\eta(T)=1$ and $\eta'$ vanishes in $\left( 0,\frac{T}{4} \right) \cup  \left( \frac{3T}{4}, T \right)$.

The next step of the proof is to apply the Carleman estimate \eqref{eq:CI} to $v$.  For $\rho=\rho_0$ and $\lambda =\lambda_0(\rho)$, we get that
\begin{equation}
\label{o2}
\Vert e^{\lambda \pR} (\lambda g)^{\frac{3}{2}} v \Vert_{L^2(Q)} + \Vert e^{\lambda \pR} (\lambda g)^{1 \slash 2} \vert \nabla v \vert \Vert_{L^2(Q)} \leq  c_1 \Vert e^{\lambda \pR} (\lambda g)^{1 \slash 2} \dN v \big \Vert_{L^2((0,T) \times \gamma)},
\end{equation}
where $c_1$ is a positive constant depending only on $\alpha_0$, $\omega$, $\gamma'$, and $T$.

Further, since $g(t) \ge \frac{4}{T^2}$ for all $t \in (0,T)$, and $\pR(t,x) \ge -\frac{16}{3 T^2} e^{2 \rho \Vert \psi \Vert_{L^\infty(\Omega)}}$ for every $(t,x) \in \left( \frac{T}{4},\frac{3T}{4} \right) \times \Omega$, by \eqref{eq:1}, we see that
\begin{eqnarray}
\label{o3}
& & \left( \frac{4 \lambda}{T^2} \right)^{1 \slash 2} e^{- \frac{16}{3 T^2}  \lambda e^{2 \rho \Vert \psi \Vert_{L^\infty(\Omega)}}} 
\left( \left( \frac{4 \lambda}{T^2} \right) \Vert v \Vert_{L^2 \left( \left( \frac{T}{4},\frac{3T}{4} \right) \times \Omega \right)} 
+ \Vert \vert \nabla v \vert \Vert_{L^2 \left( \left( \frac{T}{4},\frac{3T}{4} \right) \times \Omega \right)} \right) \nonumber \\
& \le & \Vert e^{\lambda \pR} (\lambda g)^{\frac{3}{2}} v \Vert_{L^2(Q)} + \Vert e^{\lambda \pR} (\lambda g)^{1 \slash 2} \vert \nabla v \vert \Vert_{L^2(Q)}.
\end{eqnarray}
On the other hand, for all $(t,x) \in (0,T) \times \gamma$, we have $\pR(t,x) \leq - c_2 g(t)$, where $c_2:= e^{2 \rho \Vert \psi \Vert_{L^\infty(\Omega)}} - e^{\rho \Vert \psi \Vert_{L^\infty(\Omega)}} \in (0,+\infty)$, according to \eqref{eq:1}, which entails
$$ 
\Vert e^{\lambda \pR} (\lambda g)^{1 \slash 2} \dN v \big \Vert_{L^2((0,T) \times \gamma)} \le c_3 \Vert \dN v \big \Vert_{L^2((0,T) \times \gamma)}, $$
with $c_3 := \sup_{r \in (0,+\infty)} r^{1 \slash 2} e^{-c_2 r } \in (0,+\infty)$. 

From this and \eqref{o2}-\eqref{o3}, it then follows that
\begin{equation}
\label{eq:3,2}
\Vert v \Vert_{H^1 \left( \Omega \times \left( \frac{T}{4},\frac{3T}{4} \right) \right)} 
\le c_4 \Vert \dN v \Vert_{L^2((0,T) \times \gamma)},
\end{equation}
where $c_4$ is a positive constant depending only on $\alpha_0$, $\omega$, $\gamma'$, and $T$.
Finally, we obtain Proposition \ref{p1} by combining \eqref{eq:3,1} with \eqref{eq:3,2}.
\end{proof}
Armed with Proposition \ref{p1}, we may now complete the proof of Theorem \ref{t2}.


\subsection{Completion of the proof}
\label{sec-cpr}

Due to \cite[Remark, pp. 367]{cho} and the hypothesis $\sigma(0) \neq 0$, we may assume without loss of generality, that $\sigma(t) = 1$ for all $t \in [0,T]$, in the sequel. 

\noindent a) {\it Time differentiation of the solution}. 
For $\beta \in H_0^1(\Omega)$, we denote by $u$ the $H^{2,1}(\Omega) \cap \mathcal{C}([0,T],H_0^1(\Omega))$-solution given by Theorem \ref{t1}, to the IBVP
\begin{equation*}
\begin{cases} 
\partial_t u - \Delta u = \beta & \text{in} \quad Q, \\ 
u(0,\cdot) = 0 & \text{in} \quad \Omega, \\
u = 0 & \text{on} \quad \Sigma,
\end{cases}
\end{equation*}
where the abuse of notation $\beta$ stands for the function $Q \ni (t,x) \mapsto \beta(x) \in L^2(0,T;H_0^1(\Omega))$. 
Then, $v := \partial_t u$ is a solution to the system
\begin{equation}
\label{eq:bvpp}
\begin{cases} 
\partial_t v - \Delta v = 0 & \text{in} \quad Q, \\ 
v(0,\cdot) = \beta & \text{in} \quad \Omega, \\
v = 0 & \text{on} \quad \Sigma,
\end{cases}
\end{equation}
and since $\beta \in H^1_0(\Omega)$, we find that $v\in H^{2,1}(Q) \cap \mathcal C([0,T];H^1_0(\Omega))$ by applying Theorem \ref{t1}. 

\noindent b) {\it Fiber and spectral decompositions}. We stick with the notations of Section \ref{sec-pr-t1} and still denote by $A$ the Dirichlet Laplacian in $L^2(\Omega)$, with domain $D(A)= H^1_0(\Omega)\cap H^2(\Omega)$.
Let $\mathcal F_{x_n}$ be the partial Fourier transform with respect to $x_n$, defined for all $\varphi \in L^1(Q)$, by
$$\mathcal F_{x_n} \varphi(t,x',k):=(2\pi)^{-{1 \slash 2}} \int_\br \varphi(t,x',x_n) e^{-ikx_n}dx_n,\ (t,x') \in(0,T) \times \omega,\ k \in \br, $$
and then suitably extended into a unitary operator in $L^2(Q)$. Due to the translational invariance of the operator $A$ in the infinite direction $x_n$, we have
\begin{equation}
\label{fiber}
\mathcal F_{x_n}  A \mathcal F_{x_n}^{-1} = \int_\br^{\oplus} H(k) dk,
\end{equation}
where $H(k):=A'+k^2$ and $A'$ is the Dirichlet Laplacian in $L^2(\omega)$. Since the multiplier by $k^2$ is bounded in $L^2(\omega)$, then we have $D(H(k))=D(A')=H_0^1(\omega) \cap H^2(\omega)$.

Further, as $\omega$ is bounded, we know that the resolvent of the operator $A'$ is compact, and that the spectrum of $A'$ is purely discrete. 
We denote by $\{ \lambda_\ell,\ \ell \geq1 \}$ the sequence of (positive) eigenvalues of $A'$, arranged in non-decreasing order and repeated according to their multiplicity. Next, we introduce a Hilbert basis $\{ \varphi_\ell,\ \ell \geq1 \}$ in $L^2(\omega)$, of eigenfunctions of $A'$, i.e.
$$ A' \varphi_\ell = \lambda_\ell \varphi_\ell,\ \ell \ge 1. $$
Then, putting $\hat{v}(\cdot,\cdot,k):=\mathcal F_{x_n}v(\cdot,\cdot,k)$ and $\hat{\beta}(\cdot,k):=\mathcal F_{x_n} \beta(\cdot,k)$ for all $k \in \br$, we find upon applying the transform
$\mathcal F_{x_n}$ to both sides of \eqref{eq:bvpp}, that
$$
\begin{cases} 
\partial_t \hat{v}(\cdot,\cdot,k) + H(k) \hat{v}(\cdot,\cdot,k) = 0 & \text{in} \quad  (0,T) \times \omega, \\ 
\hat{v}(0,\cdot,k) = \hat{\beta}(\cdot,k) &  \text{in}\quad \omega, \\
\hat{v}(\cdot,\cdot,k)= 0 & \text{on} \quad  (0,T) \times \partial\omega.
\end{cases}
$$
Therefore, each function $(0,T) \ni t \mapsto v_{k,\ell}(t):=\langle \hat{v}(t,\cdot,k), \varphi_\ell \rangle_{L^2(\omega)}$, for $(k,\ell) \in \br \times \bn^*:=\{ 1,2,\ldots \}$, is a solution to the differential equation
$v_{k,\ell}'+ (\lambda_\ell+k^2)v_{k,\ell} = 0$ in $(0,T)$, with initial data $v_{k,\ell}(0) = \beta_{k,\ell}:=\langle \hat{\beta}(\cdot,k) , \varphi_\ell \rangle_{L^2(\omega)}$.
As a consequence, we have
$$v_{k,\ell}(t)=\beta_{k,\ell} e^{-(\lambda_\ell+k^2)t},\ t \in [0,T],\ (k,\ell) \in \br \times \bn^*. $$
For further use, we notice from the above identity that
\begin{equation}
\label{co1}
\beta_{k,\ell}= e^{(\lambda_\ell+k^2) T} v_{k,\ell}(T),\ (k,\ell) \in \br \times \bn^*.
\end{equation}

\noindent c) {\it Splitting energies}.
Let us introduce the measure $\mu:=\sum_{\ell \in \mathbb N^*} \delta_\ell$ on $\br$, where $\delta_\ell$ denotes the delta Dirac measure at $\ell$, in such a way that $\|\beta\|_{L^2(\Omega)}^2 =\int_{\br^2} |\beta_{k,\ell}|^2d\mu(\ell)dk$, by the Plancherel formula. Thus, for all fixed $\lambda \in(\lambda_1,+\infty)$, it holds true that
$$
\|\beta\|_{L^2(\Omega)}^2 =\int_{\mathcal E_\lambda}|\beta_{k,\ell}|^2d\mu(\ell)dk+\int_{\br^2\setminus \mathcal E_\lambda}|\beta_{k,\ell}|^2d\mu(\ell)dk,
$$
where $\mathcal E_\lambda:=\{ (k,\ell) \in \br \times \mathbb N^*;\ \lambda_\ell + k^2 \le \lambda \}$ is the set of energies lower or equal to $\lambda$.
In light of \eqref{co1}, this entails that
\begin{eqnarray}
\|\beta\|_{L^2(\Omega)}^2 & \le & \int_{\mathcal E_\lambda} e^{2(\lambda_\ell+k^2) T} | v_{k,\ell}(T) |^2 d\mu(\ell)dk + \frac{1}{\lambda} \int_{\br^2}(\lambda_\ell+k^2)|\beta_{k,\ell}|^2d\mu(\ell)dk \nonumber \\
&\le & e^{2\lambda T} \int_{\br^2} | v_{k,\ell}(T) |^2 d\mu(\ell)dk+\frac{1}{\lambda} \int_{\br^2}(\lambda_\ell+k^2)|\beta_{k,\ell}|^2d\mu(\ell)dk \nonumber \\
&\le & e^{2\lambda T} \|v(T,\cdot)\|_{L^2(\Omega)}^2+\frac{1}{\lambda}\int_{\br^2}(\lambda_\ell+k^2)|\beta_{k,\ell}|^2d\mu(\ell)dk. \label{co2}
\end{eqnarray}
Further, we have
$\int_{\br^2}(\lambda_\ell+k^2) |\beta_{k,\ell}|^2d\mu(\ell) dk = \int_\br \| H(k)^{1 \slash 2} \hat{\beta}(\cdot,k) \|_{L^2(\omega)}^2  dk =  \| A^{1 \slash 2} \beta \|_{L^2(\Omega)}^2 = \| \nabla \beta \|_{L^2(\Omega)^n}^2$, in virtue of \eqref{fiber}, and consequently 
\begin{equation}
\label{eq:4,7}
\int_{\br^2}(\lambda_\ell+k^2) |\beta_{k,\ell}|^2d\mu(\ell) dk \le  \| \beta \|_{H^1(\Omega)}^2 \le M^2.
\end{equation} 
Moreover, by applying Proposition \ref{p1} to the solution $v$ of the IBVP \eqref{eq:bvpp}, we get that
$$
\Vert v(T,\cdot) \Vert_{H^1(\Omega)} \leq C  \Vert \dN v \Vert_{L^2((0,T) \times \gamma)} \leq C  \Vert \dN u \Vert_{H^1(0,T; L^2(\gamma))}.
$$
Putting this together with \eqref{co2}-\eqref{eq:4,7}, we obtain
\begin{equation}
\label{eq:4,8}
\Vert \beta \Vert_{L^2(\Omega)}^2 \le 
\left( C e^{2 \lambda T} \kappa^2 + \frac{M^2}{\lambda} \right),
\end{equation}
where $\kappa := \Vert \dN u \Vert_{H^1(0,T;L^2(\gamma))} \in [0,+\infty)$.

Having established \eqref{eq:4,8}, we investigate each of the three cases $\kappa=0$, $\kappa \in \left(0,e^{-2T\lambda_1}\right)$, and
$\kappa \in [e^{-2T\lambda_1},+\infty)$, separately. We start with $\kappa \in \left(0,e^{-2T\lambda_1}\right)$. In this case, taking $\lambda=-\frac{\ln \kappa}{2T}$ in \eqref{eq:4,8}, leads to
\begin{equation}
\label{eq:4,9}
\Vert \beta \Vert_{L^2(\Omega)}^2 \leq C\left(\kappa+|\ln(\kappa)|^{-1}\right)\leq C\left(\kappa^{{1 \slash 2}}+|\ln(\kappa)|^{-{1 \slash 2}}\right)^2, 
\end{equation}
which is exactly the statement \eqref{eq:in}. 
Moreover, we find $\beta=0$ by sending $\kappa$ to $0$ in \eqref{eq:4,9}, entailing \eqref{eq:in} with $\kappa=0$. Finally, for all $\kappa \in [e^{-2T\lambda_1},+\infty)$, it is apparent that
$$
\Vert \beta \Vert_{L^2(\Omega)}\leq M \leq Me^{T\lambda_1}\left(\kappa^{{1 \slash 2}}+|\ln(\kappa)|^{-{1 \slash 2}}\right). $$
Therefore, \eqref{eq:in} holds for any $\kappa \in [0,+\infty)$, and the proof of Theorem \ref{t2} is complete.


\end{document}